\documentclass[12pt,draft]{article} 
\usepackage{amsmath,amssymb,amsthm,amsfonts, indentfirst}
\usepackage{enumerate,color,bm,graphicx,here}
%\usepackage{showkeys}
%%%%% textstyle %%%%%
\topmargin=-1.5cm
\oddsidemargin=0.3cm
\pagestyle{plain}
\textwidth=16cm
\textheight=23cm

\makeatletter
%%%%%% Cite %%%%%%
\def\@cite#1#2{[{{\bfseries #1}\if@tempswa , #2\fi}]}
%%%%%% Section %%%%%%
\renewcommand{\section}{%
\@startsection{section}{1}{\z@}
{0.5truecm plus -1ex minus -.2ex}%
{1.0ex plus .2ex}{\bfseries\large}}
\def\@seccntformat#1{\csname the#1\endcsname.\ }
\makeatother

%%%%%% equation %%%%%
\setlength\arraycolsep{2pt}

%%%%% command %%%%%
\numberwithin{equation}{section} 
\pagestyle{plain}
\theoremstyle{theorem}
\newtheorem{thm}{Theorem}[section]

\newtheorem{lem}[thm]{Lemma}

\theoremstyle{definition}

\newtheorem{remark}{Remark}[section]

\newcommand{\ep}{\varepsilon}

%%%                   %%%
\begin{document}
\footnote[0]
    {2020{\it Mathematics Subject Classification}\/. 
    Primary: 35K51; %Initial-boundary value problems for second-order parabolic systems
    Secondary: 35B44, %Blow-up in context of PDEs
                    92C17 %Cell movement (chemotaxis, etc.)
    }
\footnote[0]
    {{\it Key words and phrases}\/: 
    Chemotaxis, Lotka--Volterra, boundedness, finite-time blow-up.
     }
     %chemotaxis; logistic source; finite-time blow-up; degenerate}
%==========================title==========================
\begin{center}
    \Large{{\bf Can chemotactic effects lead to blow-up or not in two-species chemotaxis-competition models?}}
\end{center}
\vspace{5pt}
%=========================author=========================
\begin{center}
    Masaaki Mizukami\footnote{Corresponding author}\\
    \vspace{10pt}
    Department of Mathematics, 
    Faculty of Education, Kyoto University of Education\\ 
    1, Fujinomori, Fukakusa, Fushimi-ku, Kyoto 612-8522, Japan\\
    \vspace{15pt}

    Yuya Tanaka\footnote{Partially supported by JSPS KAKENHI Grant Number JP22J11193.}\\
    \vspace{10pt}
    Department of Mathematics, 
    Tokyo University of Science\\
    1-3, Kagurazaka, Shinjuku-ku, 
    Tokyo 162-8601, Japan\\
    \vspace{15pt}

    Tomomi Yokota\footnote{Partially supported by JSPS KAKENHI Grant Number JP21K03278.}\\
    \vspace{10pt}
    Department of Mathematics, 
    Tokyo University of Science\\
    1-3, Kagurazaka, Shinjuku-ku, 
    Tokyo 162-8601, Japan\\
   \footnote[0]{
    E-mail: 
    {\tt masaaki.mizukami.math@gmail.com},
    {\tt yuya.tns.6308@gmail.com},
    {\tt yokota@rs.tus.ac.jp}
    }\\
    \vspace{2pt}
\end{center}
\begin{center}    
    \small \today
\end{center}

\vspace{2pt}
%=====================  Abstract  =======================
\newenvironment{summary}
{\vspace{.5\baselineskip}\begin{list}{}{%
     \setlength{\baselineskip}{0.85\baselineskip}
     \setlength{\topsep}{0pt}
     \setlength{\leftmargin}{12mm}
     \setlength{\rightmargin}{12mm}
     \setlength{\listparindent}{0mm}
     \setlength{\itemindent}{\listparindent}
     \setlength{\parsep}{0pt}
     \item\relax}}{\end{list}\vspace{.5\baselineskip}}
\begin{summary}
{\footnotesize {\bf Abstract.}
This paper deals with the two-species chemotaxis-competition models
  \begin{align*}
    \begin{cases}
      u_t = d_1 \Delta u 
                - \chi_1 \nabla \cdot (u \nabla w) 
                + \mu_1 u (1- u^{\kappa_1-1} - a_1 v^{\lambda_1-1}),
        &\quad x \in \Omega,\ t>0,\\
      v_t = d_2 \Delta v 
                - \chi_2 \nabla \cdot (v \nabla w) 
                + \mu_2 v (1- a_2 u^{\lambda_2-1} - v^{\kappa_2-1}),
        &\quad x \in \Omega,\ t>0,\\
      0 = d_3 \Delta w + \alpha u + \beta v - h(u,v,w),
        &\quad x \in \Omega,\ t>0, 
    \end{cases}
  \end{align*}
where $\Omega \subset \mathbb{R}^n$ $(n\ge2)$ is a bounded domain 
with smooth boundary, and 
$h=\gamma w$ or $h=\frac{1}{|\Omega|}\int_\Omega(\alpha u+ \beta v)\,dx$. 
In the case that $\kappa_1=\lambda_1=\kappa_2=\lambda_2=2$ 
and $h=\gamma w$, 
it is known that smallness conditions for the chemotacic effects 
lead to boundedness of solutions in e.g.\ \cite{M_2018_MMAS}. 
However, the case that the chemotactic effects are large 
seems not to have been studied yet; 
therefore it remains to consider 
the question whether the solution is bounded also 
in the case that the chemotactic effects are large. 
The purpose of this paper is to give a negative answer to this question.
}
\end{summary}
\vspace{10pt}

\newpage
%================================================================%
%================================================================%
%                                                Section 1                                                %
%                                              Introduction                                               %
%================================================================%
%================================================================%
\section{Introduction}

\noindent{\bf Background.} A Lotka--Volterra competition model represents 
a simplified scenario of the coexistence of two competing species. 
Moreover, if the species have random spacial motion, 
the model reads
  \[
    \begin{cases}
      u_t = d_1 \Delta u + \mu_1 u (1 - u - a_1 v), \\
      v_t = d_2 \Delta v + \mu_2 v (1 - a_2 u - v) 
    \end{cases}
  \]
(see e.g.\ \cite{Murray}), where 
$d_1, d_2, \mu_1, \mu_2, a_1, a_2 > 0$ are constants and 
$u,v$ idealize the population densities of the two species. 
In a mathematical view, the diffusion and competition terms 
lead to boundedness and stabilization of solutions. 
Indeed, there is a rich literature on large time behavior of solutions 
to the above model 
(see e.g.\ \cite{Brown, CS_1977, MR_1979, KY_1993, KW_1985, Lou-Ni, Matano-Mimura}). 
In addition to the random spacial motion,
if the model includes 
concentration phenomena of the two species, 
how do solutions behave?
In order to consider this question, we next focus on 
a two-species chemotaxis-competition model 
proposed by Tello and Winkler \cite{T-W_2012}, that is, 
  \[
    \begin{cases}
      u_t = d_1 \Delta u - \chi_1 \nabla \cdot (u \nabla w) + \mu_1 u (1- u - a_1 v),\\
      v_t = d_2 \Delta v - \chi_2 \nabla \cdot (v \nabla w) + \mu_2 v (1- a_2 u - v),\\
      \tau w_t = d_3 \Delta w + \alpha u + \beta v - \gamma w,
    \end{cases}
  \]
where $\chi_1, \chi_2, d_3, \alpha, \beta, \gamma>0$ and $\tau \in\{0,1\}$ are constants 
and $w$ shows the concentration of the signal substance. 
This model describes a situation 
in which multi species move toward higher concentrations of the signal substance, 
and moreover compete with each other. 
Here the structural feature of this model 
is that it involves not only the diffusion and competition terms 
but also the chemotaxis terms.
In particular, the diffusion and competition terms 
indicate stabilization of the species, whereas
the chemotaxis terms represent 
concentration of the species.
Therefore it is one of the fundamental themes 
whether the solutions remain bounded or not.

Now let us collect known results in the above models.
We shall recall the parabolic--parabolic--elliptic case ($\tau=0$). 
In this case, boundedness and stabilization were 
obtained under smallness conditions for $\chi_1$ and $\chi_2$ 
in the cases that $a_1, a_2 \in (0,1)$ (\cite{BLM, T-W_2012}) and 
that $a_1>1>a_2$ (\cite{STW}); 
moreover, these conditions 
in \cite{BLM, STW, T-W_2012} were improved 
in some cases in \cite{M_2018_MMAS}; 
particularly, the conditions for boundedness in \cite{M_2018_MMAS} were 
given by 
$0<\chi_1<\min\big\{ \frac{d_3 \mu_1}{\alpha},\frac{a_1 d_3 \mu_1}{\beta} \big\} 
                \cdot \frac{n}{n-2}$
and 
$0<\chi_2<\min\big\{ \frac{d_3 \mu_2}{\beta},\frac{a_2 d_3 \mu_2}{\alpha} \big\}
                \cdot \frac{n}{n-2}$.
Similarly, in the fully parabolic case ($\tau=1$), 
results on global existence and asymptotic stability were obtained 
under smallness conditions for $\chi_1$ and $\chi_2$
in \cite{B-W, LMW_2015, M_2017_DCDSB}.

\medskip

\noindent{\bf The challenge of proving blow-up.}
The above studies 
indicate that 
smallness of chemotactic effects entails 
boundedness and stabilization of solutions to 
chemotaxis-competition models.
However, to the best our knowledge, 
there is no result in the case that chemotactic effects are large. 
Therefore the following question arises:

  \begin{center}
    \mbox{\it Does the solution remain bounded 
              in the case that the chemotactic effects are large?}
  \end{center}

\noindent The purpose of this work is to give a {\it negative} answer to this question.

\medskip

\noindent{\bf Review of chemotaxis systems.} 
In order to consider the above question, 
it is useful to give an overview of known results for the  
parabolic--elliptic chemotaxis system with logistic source,
  \[
    \begin{cases}
      u_t = \Delta u - \chi \nabla \cdot (u \nabla w) + \lambda u-\mu u^\kappa,\\
      0 = \Delta w - w + u, 
    \end{cases}
  \]
where $\chi, \lambda, \mu>0$ and $\kappa >1$.
As to this model, results on boundedness and blow-up were obtained 
in \cite{B-F-L, F_2021_optimal, Tello-W-2007, W-2011, W-2018}. 
Indeed, Tello and Winkler \cite{Tello-W-2007} 
proved boundedness of solutions 
in the cases that $\kappa=2$ and $\chi<\frac{n}{(n-2)_+}\mu$ 
and that $\kappa>2$ and $\chi>0$,
whereas Winkler \cite{W-2018} derived finite-time blow-up 
under some smallness condition for $\kappa>1$; 
in a simplified parabolic--elliptic 
chemotaxis system with logistic source, 
smallness conditions for $\kappa$ leading to finite-time blow-up 
were established in higher-dimensional cases by Winkler \cite{W-2011} 
and in three- and four-dimensional cases by Black et al.\ \cite{B-F-L};
moreover, Fuest \cite{F_2021_optimal} succeeded 
in improving the conditions for $\kappa$ in \cite{B-F-L, W-2011} 
and showing finite-time blow-up under largeness conditions 
for $\chi$ in the case $\kappa=2$ in higher-dimensional cases.
In summary, in the one-species model,
boundedness and blow-up 
are determined by the sizes of $\chi$ and $\kappa$.

\medskip

\noindent{\bf Main results.} From the review of the 
one-species chemotaxis systems with logistic source, 
it is expected that a behavior of solutions depends on 
chemotactic effects and competitive effects
also in the two-species model.
In this context, we consider 
two-species chemotaxis models 
with generalized competition terms, 
which read 
  \begin{align}\label{P}
    \begin{cases}
      u_t = d_1 \Delta u 
                - \chi_1 \nabla \cdot (u \nabla w) 
                + \mu_1 u (1- u^{\kappa_1-1} - a_1 v^{\lambda_1-1}),
        &\quad x \in \Omega,\ t>0,\\[1mm]
      v_t = d_2 \Delta v 
                - \chi_2 \nabla \cdot (v \nabla w) 
                + \mu_2 v (1- a_2 u^{\lambda_2-1} - v^{\kappa_2-1}),
        &\quad x \in \Omega,\ t>0,\\[1mm]
      0 = d_3 \Delta w + \alpha u + \beta v - h(u,v,w),
        &\quad x \in \Omega,\ t>0,\\[2mm]
      \nabla u \cdot \nu = \nabla v \cdot \nu = \nabla w \cdot \nu = 0, 
        &\quad x \in \partial \Omega,\ t>0,\\[1mm]
      u(x,0) = u_0(x), \ 
      v(x,0) = v_0(x), 
        &\quad x \in \Omega,
    \end{cases}
  \end{align}
where $\Omega \subset \mathbb{R}^n$ $(n \ge 2)$ 
is a bounded domain with smooth boundary $\partial \Omega$; 
$\nu$ is the outward normal vector to $\partial \Omega$;
$d_1, d_2, d_3, \chi_1, \chi_2, \mu_1, \mu_2, a_1, a_2, \alpha, \beta>0$ and 
$\kappa_1, \kappa_2, \lambda_1, \lambda_2>1$; 
  \begin{align}\label{hdef}
    h(u,v,w) :=
      \begin{cases}
        \gamma w \quad(\gamma>0)
          &\text{(Keller--Segel type (cf.\ \cite{K-S_1971, K-S}))}, \\[2mm]
        \dfrac{1}{|\Omega|} \displaystyle\int_\Omega (\alpha u + \beta v)\,dx
          &\text{(J\"{a}ger--Luckhaus type (cf.\ \cite{J-L}))}; 
      \end{cases} 
  \end{align}
the initial data $u_0, v_0$ are assumed to be nonnegative functions; 
in particular, 
in the case that $h$ is of J\"{a}ger--Luckhaus type, 
we consider the model \eqref{P} with  
  \begin{align}\label{JLcondi}
    \int_\Omega w\,dx = 0, 
    \quad t > 0.
  \end{align}
From the preceding consideration, we aim to show 
boundedness and blow-up of solutions to \eqref{P}.
Here we briefly state our main results as follows:
  \begin{itemize}
    \item {\bf Prevention of blow-up, i.e., bundedness (Theorem \ref{thm; bdd}):} 
    If $n \ge 2$ and 
      \begin{align*}
        0<\chi_1<
          \begin{cases}
            \infty
              &\mbox{if}\ \kappa_1>2,\ \lambda_1>2,\\
            \frac{d_3 \mu_1}{\alpha} \cdot \frac{n}{n-2}
              &\mbox{if}\ \kappa_1=2,\ \lambda_1>2,\\
            \frac{a_1 d_3 \mu_1}{\beta} \cdot \frac{n}{n-2}
              &\mbox{if}\ \kappa_1>2,\ \lambda_1=2,\\
            \min \big\{ \frac{d_3 \mu_1}{\alpha}, \frac{a_1 d_3 \mu_1}{\beta} \big\}
            \cdot \frac{n}{n-2}
              &\mbox{if}\ \kappa_1=2,\ \lambda_1=2\\
          \end{cases}
      \end{align*}
    and
      \begin{align*}
        0<\chi_2<
          \begin{cases}
            \infty
              &\mbox{if}\ \kappa_2>2,\ \lambda_2>2,\\
            \frac{d_3 \mu_2}{\beta} \cdot \frac{n}{n-2}
              &\mbox{if}\ \kappa_2=2,\ \lambda_2>2,\\
            \frac{a_2 d_3 \mu_2}{\alpha} \cdot \frac{n}{n-2}
              &\mbox{if}\ \kappa_2>2,\ \lambda_2=2,\\
            \min \big\{ \frac{d_3 \mu_2}{\beta}, \frac{a_2 d_3 \mu_2}{\alpha} \big\}
              \cdot \frac{n}{n-2}
              &\mbox{if}\ \kappa_2=2,\ \lambda_2=2,
          \end{cases}
      \end{align*}
    then for all nonnegative initial data, 
    \eqref{P} has a bounded solution.
    \item {\bf Finite-time blow-up in the case 
             \boldmath{$h=\gamma w$} (Theorem \ref{thm; blow-up_KS}):} If
      \begin{align*}
        \max\{ \kappa_1, \lambda_1, \kappa_2, \lambda_2 \} < 
          \begin{cases}
            \frac{7}{6} 
              &\mbox{if}\ n \in \{ 3,4 \},\\
            1 + \frac{1}{2(n-1)} 
              &\mbox{if}\ n \ge 5,
          \end{cases}
      \end{align*}
    then for some nonnegative initial data, 
    \eqref{P} has a blow-up solution.
    \item {\bf Finite-time blow-up in the case 
             \boldmath{$h=\frac{1}{|\Omega|} \int_\Omega (\alpha u + \beta v)\,dx$}
             (Theorem \ref{thm; blow-up_JL}):} 
    If $n \ge 5$, $\lambda_1=\lambda_2=2$ and
      \begin{align*}
        \chi_1 > 
          \begin{cases}
            \frac{a_1 d_3 \mu_1}{\beta} \cdot \frac{n}{n-4}
              &\mbox{if}\ \kappa_1<2,\\
            \max \big\{ \frac{d_3 \mu_1}{\alpha}, \frac{a_1 d_3 \mu_1}{\beta} \big\}
              \cdot \frac{n}{n-4}
              &\mbox{if}\ \kappa_1=2
          \end{cases}
          \quad \mbox{and} \quad 
        \chi_2 > \frac{a_2 d_3 \mu_2}{\alpha}
      \end{align*}
    or \quad $n \ge 5$, $\lambda_1=\lambda_2=2$ and
      \begin{align*}
       \chi_1 > \frac{a_1 d_3 \mu_1}{\beta}
          \quad \mbox{and} \quad 
        \chi_2> 
          \begin{cases}
            \frac{a_2 d_3 \mu_2}{\alpha} \cdot \frac{n}{n-4}
              &\mbox{if}\ \kappa_2<2,\\
            \max \big\{ \frac{d_3 \mu_2}{\beta}, \frac{a_2 d_3 \mu_2}{\alpha} \big\}
              \cdot \frac{n}{n-4}
              &\mbox{if}\ \kappa_2=2, 
          \end{cases} 
      \end{align*}
    then for some nonnegative initial data, 
    \eqref{P} has a blow-up solution.
  \end{itemize}

\noindent{\bf Main ideas and plan of the paper.} We first collect 
some preliminary facts on 
local existence of solutions to \eqref{P} 
and estimates for masses of $u$ and $v$ in Section \ref{preli}. 
In Section \ref{secbdd} 
we show boundedness of solutions to \eqref{P}, 
the proof of which is based on \cite{M_2018_MMAS}. 
The key point is to establish an $L^p$-estimate for  
$u$ and an $L^q$-estimate for $v$ 
by constructing the differential inequalities 
  \[
    \frac{1}{p} \cdot \frac{d}{dt} \int_\Omega u^p\,dx 
    \le C_1 \int_\Omega u^p\,dx 
         - C_2 \int_\Omega u^{p+\kappa_1-1}\,dx
         + C_3
  \]
and 
  \[
    \frac{1}{q} \cdot \frac{d}{dt} \int_\Omega v^q\,dx 
    \le C_4 \int_\Omega v^q\,dx 
         - C_5 \int_\Omega v^{q+\kappa_2-1}\,dx
         + C_6.
  \]
In Section \ref{sectionKS} 
we prove finite-time blow-up 
in the case that $h$ is of Keller--Segel type. 
Referring to \cite{W-2018}, we will introduce the functions 
  \[
    \phi_U(t) := \int^{s_0}_{0} s^{-b}(s_0-s) U(s,t)\,ds
  \quad\mbox{and}\quad
    \phi_V(t) := \int^{s_0}_{0} s^{-b}(s_0-s) V(s,t)\,ds
  \]
with $s_0 \in (0,R^n)$ and $b \in (0,1)$, where 
  \[
    U(s,t) := \int^{s^\frac{1}{n}}_0 \rho^{n-1} u(\rho,t)\,d\rho
  \quad\mbox{and}\quad
    V(s,t) := \int^{s^\frac{1}{n}}_0 \rho^{n-1} v(\rho,t)\,d\rho.
  \]
In this case the goal is to obtain 
a superlinear differential inequality for $\phi_U+\phi_V$, 
and toward this goal, we need to control the integrals 
  \[
    \int^{s_0}_0 s^{-b}(s_0-s) \left( \int^s_0 U_s(\sigma,t)
    V_s^{\lambda_1-1}(\sigma,t)\,d\sigma \right)\,ds
  \]
and
  \[
    \int^{s_0}_0 s^{-b}(s_0-s) \left( \int^s_0 V_s(\sigma,t)
    U_s^{\lambda_2-1}(\sigma,t)\,d\sigma \right)\,ds
  \]
which come from the competition terms. 
To this end, we derive pointwise upper estimates for $u$ and $v$ 
(see Lemma \ref{lemprofile}) by an argument similar to that in 
\cite[Lemma 3.3]{W-2018}.
Finally, in Section \ref{sectionJL} 
we show finite-time blow-up 
in the case that $h$ is of J\"{a}ger--Luckhaus type. 
In this case, by the structure of the third equation in \eqref{P}, 
it is sufficient to establish 
a superlinear differential inequality for $\phi_U$ or $\phi_V$  
with the parameter $b>1$.
The key to obtaining the inequality for $\phi_U$ or $\phi_V$ is 
to derive concavity of $U$ and $V$ (see Lemma \ref{increase}).  
Here, this property will be shown by using a comparison principle, 
which is a different method in \cite[Lemma 2.2]{W-2018_nonlinear}.

%================================================================%
%================================================================%
%                                                Section 2                                                %
%================================================================%
%================================================================%
\section{Preliminaries}\label{preli}

We first recall the known result about local existence of solutions to \eqref{P}, 
which is proved by a standard fixed point argument 
as in \cite{C-W, STW}.

\begin{lem}\label{localsol}
Let $\Omega \subset \mathbb{R}^n$ $(n \ge 2)$ 
be a bounded domain with smooth boundary and 
$d_1, d_2, d_3, \chi_1, \chi_2, \mu_1, \mu_2, a_1, a_2, \alpha, \beta>0$, 
$\kappa_1, \kappa_2, \lambda_1, \lambda_2>1$ 
and let $h$ satisfy \eqref{hdef}. 
Assume that 
  \begin{align}\label{initial}
    u_0, v_0 \in C^0(\overline{\Omega}) \ \mbox{are nonnegative.}
  \end{align}
Then there exist $T_{\rm max} \in (0,\infty]$ and a unique triplet
$(u,v,w)$ of functions 
  \[
    u,v,w \in C^0(\overline{\Omega} \times [0,T_{\rm max})) 
                 \cap C^{2,1}(\overline{\Omega} \times (0,T_{\rm max})),
  \]
which solves \eqref{P} classically, 
and 
fulfills \eqref{JLcondi} in the case that $h$ is of J\"{a}ger--Luckhaus type.
Moreover, $u,v\ge0$ in $\Omega \times (0,T_{\rm max})$ and 
  \begin{align}\label{criterion}
    \mbox{if}\ T_{\rm max}<\infty, 
    \quad \mbox{then} \quad
    \lim_{t \nearrow T_{\rm max}} 
      ( \| u(\cdot,t) \|_{L^\infty(\Omega)} + \| v(\cdot,t) \|_{L^\infty(\Omega)} )
    = \infty.
  \end{align}
Furthermore, if $u_0, v_0$ are radially symmetric, then so are $u,v,w$ 
for any $t \in (0,T_{\rm max})$.
\end{lem}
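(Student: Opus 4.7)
My plan is the standard Banach fixed-point scheme used in \cite{C-W, STW}. Fix $R > \|u_0\|_{L^\infty(\Omega)}+\|v_0\|_{L^\infty(\Omega)}$ and a small $T\in (0,1)$, and consider the closed set
\[
X_T := \{(\tilde u, \tilde v) \in C^0(\overline\Omega \times [0,T])\times C^0(\overline\Omega \times [0,T]) : \tilde u, \tilde v \ge 0,\ \|\tilde u\|_{L^\infty}+\|\tilde v\|_{L^\infty} \le R\}.
\]
Given $(\tilde u, \tilde v) \in X_T$, I first solve the third equation for $w(\cdot,t)$ pointwise in $t$. In the Keller--Segel case, the problem $-d_3 \Delta w + \gamma w = \alpha \tilde u + \beta \tilde v$ together with $\nabla w \cdot \nu = 0$ is uniquely solvable, and $W^{2,p}$-theory yields $\|w(\cdot,t)\|_{W^{2,p}(\Omega)} \le C(\|\tilde u(\cdot,t)\|_{L^p(\Omega)}+\|\tilde v(\cdot,t)\|_{L^p(\Omega)})$ for every $p\in(1,\infty)$. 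In the J\"ager--Luckhaus case the right-hand side has zero mean, so the Neumann Poisson problem is solvable, and the normalization \eqref{JLcondi} selects a unique solution with the same type of estimate. Sobolev embedding then furnishes a uniform bound for $\nabla w$ in $C^0(\overline\Omega \times [0,T])$ on the whole set $X_T$.

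Next I would define $(u,v) = \Phi(\tilde u, \tilde v)$ as the solution of the two linear parabolic equations obtained by freezing the drift $\nabla w$ and the logistic nonlinearities at $(\tilde u, \tilde v)$, namely
\begin{align*}
u_t &= d_1 \Delta u - \chi_1 \nabla \cdot (u \nabla w[\tilde u, \tilde v]) + \mu_1 \tilde u(1 - \tilde u^{\kappa_1-1} - a_1 \tilde v^{\lambda_1-1}),\\
v_t &= d_2 \Delta v - \chi_2 \nabla \cdot (v \nabla w[\tilde u, \tilde v]) + \mu_2 \tilde v(1 - a_2 \tilde u^{\lambda_2-1} - \tilde v^{\kappa_2-1}),
\end{align*}
subject to $u(\cdot,0)=u_0$, $v(\cdot,0)=v_0$ and homogeneous Neumann boundary conditions. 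These are inhomogeneous linear problems with bounded continuous coefficients, hence uniquely solvable in $C^0(\overline\Omega \times [0,T]) \cap C^{2,1}(\overline\Omega \times (0,T])$. For $T$ small enough depending on $R$, one verifies the self-mapping $\Phi(X_T)\subset X_T$ and the contraction property of $\Phi$ in the sup-norm by combining the elliptic estimate above with standard parabolic semigroup estimates. Banach's fixed-point theorem then produces a unique fixed point, which after parabolic bootstrapping (Schauder theory applied to coefficients that now depend smoothly on $(u,v)$) is a classical solution of \eqref{P} on $\overline\Omega\times[0,T]$.

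Nonnegativity of $u$ follows from rewriting its equation as
\[
u_t - d_1\Delta u + \chi_1 \nabla w\cdot\nabla u = u\bigl(-\chi_1 \Delta w + \mu_1(1-u^{\kappa_1-1}-a_1 v^{\lambda_1-1})\bigr)
\]
and applying the parabolic maximum principle, since the right-hand side vanishes at $u=0$ and the bracketed factor is bounded once a classical solution exists; the argument for $v$ is analogous. The maximality of $T_{\max}$ and the extensibility criterion \eqref{criterion} follow from a standard continuation argument: as long as $\|u(\cdot,t)\|_{L^\infty}+\|v(\cdot,t)\|_{L^\infty}$ stays bounded on $[0,T_{\max})$, elliptic regularity bounds $\nabla w$ and Schauder estimates yield the uniform $C^{2+\theta,1+\theta/2}$ control needed to restart the fixed-point scheme past $T_{\max}$, contradicting maximality. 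Radial symmetry is immediate from uniqueness: under the rotation-invariance of $\Omega$, \eqref{P} and the boundary conditions, a radial ansatz produces a solution with radial $u_0, v_0$, which must coincide with the one just constructed. The main subtlety is the J\"ager--Luckhaus branch, where the nonlocal source and the constraint \eqref{JLcondi} have to be carried through at every iteration; once the same $W^{2,p}$-estimate for $w$ is secured in that setting, the two cases run in parallel.
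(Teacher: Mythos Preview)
Your proposal matches the paper's approach exactly: the paper gives no proof of this lemma, stating only that it ``is proved by a standard fixed point argument as in \cite{C-W, STW}'', and your sketch is precisely such an argument.

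One small technical slip worth flagging: with $X_T$ defined to require $\tilde u,\tilde v\ge 0$, the self-mapping $\Phi(X_T)\subset X_T$ can fail as written, because the frozen source $\mu_1\tilde u(1-\tilde u^{\kappa_1-1}-a_1\tilde v^{\lambda_1-1})$ may be negative and the resulting linear equation for $u$ need not preserve the sign of the initial data. The standard remedy---either dropping the sign constraint from $X_T$ (and replacing powers by $|\tilde u|^{\kappa_1-1}$ etc.\ to keep them well-defined, then proving nonnegativity only at the fixed point), or freezing the logistic term as $u\cdot\mu_1(1-\tilde u^{\kappa_1-1}-a_1\tilde v^{\lambda_1-1})$ so that the comparison principle applies at every iterate---closes the scheme without altering anything else in your outline.
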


We next show that the masses of $u$ and $v$ 
can be controlled by $\mu_1, u_0$ and $\mu_2, v_0$, respectively.

\begin{lem}\label{L1esti}
Assume that $u_0, v_0 \in C^0(\overline{\Omega})$ are nonnegative. 
Then
  \[
    \int_\Omega u(x,t)\,dx \le e^{\mu_1 t} \int_\Omega u_0(x)\,dx
    \quad \mbox{and} \quad
    \int_\Omega v(x,t)\,dx \le e^{\mu_2 t} \int_\Omega v_0(x)\,dx
  \]
for all $t \in (0,T_{\rm max})$.
\end{lem}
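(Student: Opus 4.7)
The plan is a one-shot integration argument: test the first equation of \eqref{P} against the constant function $1$ on $\Omega$, use the Neumann boundary conditions to kill the diffusion and chemotaxis fluxes, then discard the nonpositive competition contributions and finish via Gronwall.

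Concretely, I would first integrate the PDE
\[
  u_t = d_1 \Delta u - \chi_1 \nabla \cdot (u \nabla w)
        + \mu_1 u(1 - u^{\kappa_1-1} - a_1 v^{\lambda_1-1})
\]
over $\Omega$. The divergence theorem combined with the boundary conditions $\nabla u \cdot \nu = 0$ and $\nabla w \cdot \nu = 0$ makes both $\int_\Omega \Delta u\,dx$ and $\int_\Omega \nabla \cdot (u \nabla w)\,dx$ vanish, yielding
\[
  \frac{d}{dt} \int_\Omega u\,dx
  = \mu_1 \int_\Omega u\,dx
  - \mu_1 \int_\Omega u^{\kappa_1}\,dx
  - \mu_1 a_1 \int_\Omega u\, v^{\lambda_1-1}\,dx
  \qquad \text{on } (0,T_{\rm max}).
\]

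Because Lemma~\ref{localsol} gives $u,v \ge 0$, the two negative terms on the right-hand side are nonpositive (here $\kappa_1,\lambda_1>1$ and $a_1>0$ are used only to note their signs). Dropping them leaves the linear differential inequality
\[
  \frac{d}{dt} \int_\Omega u(x,t)\,dx
  \le \mu_1 \int_\Omega u(x,t)\,dx,
\]
and a standard Gronwall comparison with initial value $\int_\Omega u_0\,dx$ delivers the claimed bound. The argument for $v$ is identical, integrating the second equation of \eqref{P} and using the sign of $u$ and $v$ to discard the $-v^{\kappa_2}$ and $-a_2 u^{\lambda_2-1} v$ contributions.

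There is essentially no obstacle here; the only mild care required is to confirm that the regularity provided by Lemma~\ref{localsol}, namely $u,v,w \in C^{2,1}(\overline{\Omega} \times (0,T_{\rm max}))$ together with continuity up to $t=0$, justifies differentiating $t \mapsto \int_\Omega u(\cdot,t)\,dx$ under the integral sign and applying the divergence theorem on each time slice. Once that is noted, the estimate follows in a few lines.
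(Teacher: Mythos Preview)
Your proposal is correct and matches the paper's own proof essentially line for line: integrate the equations over $\Omega$, use the Neumann conditions to eliminate the flux terms, drop the nonpositive contributions by nonnegativity of $u,v$, and conclude via Gronwall. The paper presents this in two sentences without writing out the intermediate identity, but the argument is identical.
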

%-----------------------------------------------------------------%
\begin{proof}
By integrating the first and second equations in \eqref{P},  
and using that $\mu_1>0$ and $\mu_2>0$, it follows that 
  \[
    \frac{d}{dt}\int_\Omega u\,dx \le \mu_1 \int_\Omega u\,dx
    \quad \mbox{and} \quad
    \frac{d}{dt}\int_\Omega v\,dx \le \mu_2 \int_\Omega v\,dx
  \]
for all $t\in(0,T_{\rm max})$.
These inequalities imply the conclusion of this lemma.
\end{proof}

%================================================================%
%================================================================%
%                                                Section 3                                                %
%================================================================%
%================================================================%
\section{Boundedness}\label{secbdd}

In this section let us denote by $(u,v,w)$ 
the local classical solution of \eqref{P} on $[0,T_{\rm max})$
given in Lemma \ref{localsol}. 
We shall prove the following theorem 
which asserts global existence and boundedness in \eqref{P}.

%================================================================%
%                                              Theorem 3.1                                              %
%================================================================%
\begin{thm}\label{thm; bdd}
Let $\Omega \subset \mathbb{R}^n$ $(n \ge 2)$ 
be a bounded domain with smooth boundary, and let
$d_1, d_2, d_3, \chi_1, \chi_2, a_1, a_2, \alpha, \beta>0$. 
Assume that $\mu_1$ and $\mu_2$ fulfill
  \begin{align}\label{bddcondi1}
        0<\chi_1<
          \begin{cases}
            \infty
              &\mbox{if}\ \kappa_1>2,\ \lambda_1>2,\\
            \frac{d_3 \mu_1}{\alpha} \cdot \frac{n}{n-2}
              &\mbox{if}\ \kappa_1=2,\ \lambda_1>2,\\
            \frac{a_1 d_3 \mu_1}{\beta} \cdot \frac{n}{n-2}
              &\mbox{if}\ \kappa_1>2,\ \lambda_1=2,\\
            \min \big\{ \frac{d_3 \mu_1}{\alpha}, \frac{a_1 d_3 \mu_1}{\beta} \big\}
            \cdot \frac{n}{n-2}
              &\mbox{if}\ \kappa_1=2,\ \lambda_1=2
          \end{cases}
  \end{align}
and
  \begin{align}\label{bddcondi2}
        0<\chi_2<
          \begin{cases}
            \infty
              &\mbox{if}\ \kappa_2>2,\ \lambda_2>2,\\
            \frac{d_3 \mu_2}{\beta} \cdot \frac{n}{n-2}
              &\mbox{if}\ \kappa_2=2,\ \lambda_2>2,\\
            \frac{a_2 d_3 \mu_2}{\alpha} \cdot \frac{n}{n-2}
              &\mbox{if}\ \kappa_2>2,\ \lambda_2=2,\\
            \min \big\{ \frac{d_3 \mu_2}{\beta}, \frac{a_2 d_3 \mu_2}{\alpha} \big\}
              \cdot \frac{n}{n-2}
              &\mbox{if}\ \kappa_2=2,\ \lambda_2=2
          \end{cases}
  \end{align}
and that \eqref{initial} is satisfied. 
Then the corresponding solution $(u,v,w)$ 
of \eqref{P} 
is global and bounded in the sense that $T_{\rm max}=\infty$ and 
  \[
    \| u(\cdot,t) \|_{L^\infty(\Omega)} 
    + \| v(\cdot,t) \|_{L^\infty(\Omega)} 
    \le C 
  \]
for all $t \ge 0$ with some $C>0$. 
\end{thm}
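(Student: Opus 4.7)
I would follow the $L^p$-testing strategy of \cite{M_2018_MMAS} that the introduction explicitly highlights. The plan is to establish the two companion ODIs
\[
\frac{1}{p}\frac{d}{dt}\int_\Omega u^p \le C_1\int_\Omega u^p - C_2\int_\Omega u^{p+\kappa_1-1} + C_3,
\qquad
\frac{1}{q}\frac{d}{dt}\int_\Omega v^q \le C_4\int_\Omega v^q - C_5\int_\Omega v^{q+\kappa_2-1} + C_6
\]
for test exponents $p, q$ exceeding the Moser threshold $n/2$. H\"older's inequality then turns the linear term into a superlinear sink, an elementary ODE comparison yields uniform $L^p$- and $L^q$-bounds on $u$ and $v$, a standard Moser iteration (together with elliptic regularity applied to the third equation to promote the $L^p$-bound on $u, v$ to an $L^\infty$-bound on $\nabla w$) upgrades these to the required $L^\infty$-bounds, and the extensibility criterion \eqref{criterion} from Lemma \ref{localsol} forces $T_{\max} = \infty$.

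\textbf{Key computation.} Testing the first equation of \eqref{P} by $u^{p-1}$ and integrating by parts twice yields
\[
\frac{1}{p}\frac{d}{dt}\int_\Omega u^p + \frac{4(p-1)d_1}{p^2}\int_\Omega |\nabla u^{p/2}|^2 = -\frac{\chi_1(p-1)}{p}\int_\Omega u^p \Delta w + \mu_1\int_\Omega u^p - \mu_1\int_\Omega u^{p+\kappa_1-1} - \mu_1 a_1\int_\Omega u^p v^{\lambda_1-1}.
\]
Inserting $-\Delta w = (\alpha u + \beta v - h)/d_3$ from the third equation and discarding the nonnegative contribution $-h$ (which is licit because the maximum principle gives $w \ge 0$ in the Keller--Segel case, while $h$ is a nonnegative constant in the J\"ager--Luckhaus case) reduces matters to absorbing a positive $\int_\Omega u^{p+1}$ term of prefactor $\chi_1(p-1)\alpha/(pd_3)$ and a cross term $\int_\Omega u^p v$ of prefactor $\chi_1(p-1)\beta/(pd_3)$. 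A parallel computation on the second equation with test exponent $q$ yields the companion identity; the two estimates are essentially decoupled, since each only interacts with its own dissipation.

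\textbf{Closing the estimate and the main obstacle.} If $\kappa_1 > 2$ (respectively $\lambda_1 > 2$), Young's inequality absorbs $\int_\Omega u^{p+1}$ (respectively $\int_\Omega u^p v$) into $\mu_1\int_\Omega u^{p+\kappa_1-1}$ (respectively $\mu_1 a_1\int_\Omega u^p v^{\lambda_1-1}$) with no smallness condition on $\chi_1$, which accounts for the unrestricted branches of \eqref{bddcondi1}. If $\kappa_1 = 2$, the two exponents match, and the ODI closes only when $\chi_1(p-1)\alpha/(pd_3) < \mu_1$, i.e.\ when $p < p^{\ast} := \chi_1\alpha/(\chi_1\alpha - \mu_1 d_3)$ (in the nontrivial regime $\chi_1\alpha > \mu_1 d_3$); the threshold in \eqref{bddcondi1} is precisely the condition that such a $p$ can still be chosen above $n/2$, which an elementary calculation shows is equivalent to $\chi_1 < (d_3\mu_1/\alpha)\cdot n/(n-2)$. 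An identical analysis of the cross term for $\lambda_1 = 2$ delivers the complementary threshold built from $a_1 d_3 \mu_1/\beta$. The main obstacle is the fully critical case $\kappa_1 = \lambda_1 = 2$ (and symmetrically $\kappa_2 = \lambda_2 = 2$), where a single test exponent $p$ only slightly above $n/2$ must simultaneously absorb both the self- and the cross-chemotactic contributions against two distinct logistic dissipations; this forces the minimum of the two thresholds in the last line of \eqref{bddcondi1}, and is the genuinely two-species balance that goes beyond the one-species Tello--Winkler framework.
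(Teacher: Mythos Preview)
Your proposal is correct and mirrors the paper's argument essentially line for line: the paper proves Lemma~\ref{lem; Lpbdd} by the same $u^{p-1}$/$v^{q-1}$ testing, the same substitution of $-\Delta w$ via the third equation with $h\ge 0$ discarded, and the same case split (Young absorption when $\kappa_1>2$ or $\lambda_1>2$, direct coefficient comparison when equal to $2$), then upgrades to $L^\infty$ via elliptic regularity and a standard semigroup argument. The only cosmetic discrepancy is that from $u,v\in L^r$ with $r\in(n/2,n)$ the paper obtains $\nabla w\in L^{nr/(n-r)}$ (some exponent $>n$) rather than $L^\infty$, but this is already enough for the Moser/semigroup bootstrap you invoke.
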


\begin{remark} 
  In the case that $\kappa_1=\lambda_1=\kappa_2=\lambda_2=2$,  
  the conditions \eqref{bddcondi1} and \eqref{bddcondi2} are 
  the same as in \cite[(6)]{M_2018_MMAS}. 
  Thus this theorem is a generalization of \cite[Theorem 1.1]{M_2018_MMAS}.
\end{remark}

For the proof of Theorem \ref{thm; bdd} 
we derive an $L^p$-estimate for $u$ with some $p>\frac{n}{2}$ and 
an $L^q$-estimate for $v$ with some $q>\frac{n}{2}$.

%================================================================%
%                                               Lemma 3.2                                                %
%================================================================%
\begin{lem}\label{lem; Lpbdd}
Assume that \eqref{initial} holds. 
\begin{enumerate}[{\rm (i)}]
\item 
If $\mu_1$ satisfies \eqref{bddcondi1}, 
then for some $p>\frac{n}{2}$
there exists $C_1>0$ such that
  \begin{align}\label{Lpbdd of u}
    \| u(\cdot,t) \|_{L^p(\Omega)} \le C_1
  \end{align}
for all $t \in (0,T_{\rm max})$. 

\item
If $\mu_2$ satisfies \eqref{bddcondi2}, 
then for some $q>\frac{n}{2}$
there exists $C_2>0$ such that 
  \begin{align}\label{Lpbdd of v}
    \| v(\cdot,t) \|_{L^q(\Omega)}  \le C_2
  \end{align}
for all $t \in (0,T_{\rm max})$.
\end{enumerate}
\end{lem}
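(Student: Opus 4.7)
The plan for (i) is to test the first equation of \eqref{P} by $u^{p-1}$ for some $p>1$ to be fixed later, use the third equation to convert the chemotactic contribution, and absorb the resulting super-$p$ terms either by the competitive dissipation $-\mu_1\int_\Omega u^{p+\kappa_1-1}-\mu_1 a_1\int_\Omega u^p v^{\lambda_1-1}$ or by choosing $p>\frac{n}{2}$ in tune with the smallness of $\chi_1$. Part (ii) follows symmetrically after exchanging the roles of $u$ and $v$. First I would multiply the first equation by $u^{p-1}$, integrate, perform integration by parts on the taxis term, and substitute $d_3\Delta w=-\alpha u-\beta v+h$. Using nonnegativity of $h$ (in the Keller--Segel case $w\ge 0$ follows from the elliptic maximum principle applied to $-d_3\Delta w+\gamma w=\alpha u+\beta v\ge 0$ with Neumann data; in the J\"ager--Luckhaus case $h=\frac{1}{|\Omega|}\int_\Omega(\alpha u+\beta v)\ge 0$) to drop the nonpositive term $-\frac{\chi_1(p-1)}{pd_3}\int_\Omega u^p h$, this yields
\[
\frac{1}{p}\cdot\frac{d}{dt}\int_\Omega u^p \le \frac{\chi_1(p-1)\alpha}{pd_3}\int_\Omega u^{p+1} + \frac{\chi_1(p-1)\beta}{pd_3}\int_\Omega u^p v + \mu_1\int_\Omega u^p - \mu_1\int_\Omega u^{p+\kappa_1-1} - \mu_1 a_1 \int_\Omega u^p v^{\lambda_1-1}.
\]

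Next I would dispose of the two taxis-generated terms case by case, matching the four subcases in \eqref{bddcondi1}. For the $u^{p+1}$ contribution: when $\kappa_1>2$ a Young-type bound $u^{p+1}\le\eta\,u^{p+\kappa_1-1}+C_\eta$ absorbs it into $-\mu_1\int_\Omega u^{p+\kappa_1-1}$ for small $\eta$, leaving $p>\frac{n}{2}$ essentially free; when $\kappa_1=2$ I would instead pick a single $p>\frac{n}{2}$ with $\chi_1(p-1)\alpha/(pd_3)\le\mu_1$, which by elementary rearrangement $(n-2)\chi_1\alpha<nd_3\mu_1$ is feasible precisely under $\chi_1<\frac{d_3\mu_1}{\alpha}\cdot\frac{n}{n-2}$. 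For the $u^p v$ contribution: when $\lambda_1>2$ the Young bound $v\le\eta\,v^{\lambda_1-1}+C_\eta$ gives $\int_\Omega u^p v\le\eta\int_\Omega u^p v^{\lambda_1-1}+C_\eta\int_\Omega u^p$, and the first summand is absorbed into $-\mu_1 a_1\int_\Omega u^p v^{\lambda_1-1}$; when $\lambda_1=2$ the same $p$-selection trick applies under $\chi_1<\frac{a_1 d_3\mu_1}{\beta}\cdot\frac{n}{n-2}$. Combining these reductions produces the advertised differential inequality
\[
\frac{1}{p}\cdot\frac{d}{dt}\int_\Omega u^p \le C_1\int_\Omega u^p - C_2\int_\Omega u^{p+\kappa_1-1} + C_3
\]
for suitable $p>\frac{n}{2}$ and constants $C_1,C_2,C_3>0$.

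Finally, Young's inequality $C_1\int_\Omega u^p\le\frac{C_2}{2}\int_\Omega u^{p+\kappa_1-1}+C'$ followed by H\"older's $\int_\Omega u^{p+\kappa_1-1}\ge c(\int_\Omega u^p)^{(p+\kappa_1-1)/p}$ reduces the matter to a scalar ODI $y'(t)\le -c\,y(t)^{(p+\kappa_1-1)/p}+C$ for $y(t):=\int_\Omega u^p$; since the exponent $(p+\kappa_1-1)/p>1$, a standard comparison argument supplies a uniform bound on $(0,T_{\rm max})$, proving \eqref{Lpbdd of u}. The main obstacle is the $p$-selection in the critical subcases $\kappa_1=2$ or $\lambda_1=2$: one must verify that the strict hypothesis on $\chi_1$ leaves room to pick a single $p>\frac{n}{2}$ simultaneously satisfying both absorption constraints. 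This hinges on the elementary equivalence that some $p>\frac{n}{2}$ obeys $\chi_1(p-1)\alpha\le\mu_1 pd_3$ if and only if $(n-2)\chi_1\alpha<nd_3\mu_1$, which is exactly the content of the factor $\frac{n}{n-2}$ appearing in \eqref{bddcondi1}.
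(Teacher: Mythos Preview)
Your proposal is correct and follows essentially the same route as the paper's proof: test against $u^{p-1}$, substitute $\Delta w$ via the third equation, drop the $h$-term by nonnegativity, and then handle the four subcases of \eqref{bddcondi1} exactly as you describe (Young absorption when $\kappa_1>2$ or $\lambda_1>2$, and the $p$-selection trick when $\kappa_1=2$ or $\lambda_1=2$), arriving at the displayed ODI and closing with H\"older plus an ODE comparison. The only cosmetic point is that your absorption inequality should read $\chi_1(p-1)\alpha/(pd_3)<\mu_1$ with strict inequality (so that the remaining coefficient of $\int_\Omega u^{p+\kappa_1-1}$ is positive); your stated equivalence with $(n-2)\chi_1\alpha<nd_3\mu_1$ is then exactly right.
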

%------------------------------proof------------------------------%
\begin{proof}
We let $p>\frac{n}{2}$ be fixed later. 
Multiplying the first equation in \eqref{P} by $u^{p-1}$ 
and integrating it over $\Omega$, we have
  \begin{align*}
    \frac{1}{p} \cdot \frac{d}{dt} \int_\Omega u^p\,dx 
    &= d_1 \int_\Omega u^{p-1} \Delta u\,dx
         - \chi_1 \int_\Omega u^{p-1} \nabla \cdot (u \nabla w)\,dx 
    \\ 
    &\quad\,
         + \mu_1 \int_\Omega u^p(1-u^{\kappa_1-1}-a_1v^{\lambda_1-1})\,dx
    \\ 
    &= - \frac{4(p-1)}{p^2} d_1 \int_\Omega | \nabla u^\frac{p}{2} |^2\,dx 
         - \frac{p-1}{p} \chi_1 \int_\Omega u^p \Delta w\,dx 
    \\ 
    &\quad\,
         + \mu_1 \int_\Omega u^p\,dx 
         - \mu_1 \int_\Omega u^{p+\kappa_1-1}\,dx 
         - a_1 \mu_1 \int_\Omega u^p v^{\lambda_1-1}\,dx 
  \end{align*}
for all $t \in (0,T_{\rm max})$.
From the third equation in \eqref{P} 
and the nonnegativity of $h$, it follows that 
  \[
    - \frac{p-1}{p} \chi_1 \int_\Omega u^p \Delta w\,dx
    \le 
       \frac{p-1}{p} \cdot \frac{\alpha \chi_1}{d_3}  \int_\Omega u^{p+1}\,dx 
    + \frac{p-1}{p} \cdot \frac{\beta \chi_1}{d_3} \int_\Omega u^p v\,dx 
  \]
and so, we see that 
  \begin{align}\label{up}
    \frac{1}{p} \cdot \frac{d}{dt} \int_\Omega u^p\,dx 
    &\le - \frac{4(p-1)}{p^2} d_1 \int_\Omega | \nabla u^\frac{p}{2} |^2\,dx 
    \notag\\ 
    &\quad\,
         + \frac{p-1}{p} \cdot \frac{\alpha \chi_1}{d_3}  \int_\Omega u^{p+1}\,dx 
         + \frac{p-1}{p} \cdot \frac{\beta \chi_1}{d_3} \int_\Omega u^p v\,dx 
    \notag\\ 
    &\quad\,
         + \mu_1 \int_\Omega u^p\,dx 
         - \mu_1 \int_\Omega u^{p+\kappa_1-1}\,dx 
         - a_1 \mu_1 \int_\Omega u^p v^{\lambda_1-1}\,dx 
  \end{align}
for all $t \in (0,T_{\rm max})$. 
We first prove \eqref{Lpbdd of u} in the case that $\kappa_1=2$ and $\lambda_1=2$. 
In this case, the above inequality gives 
  \begin{align*}
    \frac{1}{p} \cdot \frac{d}{dt} \int_\Omega u^p\,dx 
    &\le \mu_1 \int_\Omega u^p\,dx 
         - \left( 
             \mu_1 - \frac{p-1}{p} \cdot \frac{\alpha \chi_1}{d_3} 
            \right)
            \int_\Omega u^{p+1}\,dx 
    \\
    &\quad\,
         - \left(
              a_1 \mu_1 -  \frac{p-1}{p} \cdot \frac{\beta \chi_1}{d_3} 
            \right) 
            \int_\Omega u^p v\,dx. 
  \end{align*}
From \eqref{bddcondi1} we can choose $p>\frac{n}{2}$ fulfilling 
  \begin{align}\label{kappa_1=2}
    \mu_1 - \frac{p-1}{p} \cdot \frac{\alpha \chi_1}{d_3} > 0
  \end{align}
and
  \begin{align}\label{lambda_1=2}
    a_1 \mu_1 -  \frac{p-1}{p} \cdot \frac{\beta \chi_1}{d_3} > 0.
  \end{align}
Then, noting that 
$- \big(
              a_1 \mu_1 -  \frac{p-1}{p} \cdot \frac{\beta \chi_1}{d_3} 
            \big) 
            \int_\Omega u^p v\,dx\le0$,
by using H\"{o}lder's inequality we derive that 
  \begin{align*}
    \frac{1}{p} \cdot \frac{d}{dt} \int_\Omega u^p\,dx 
    \le \mu_1 \int_\Omega u^p\,dx 
         - c_1 \left( \int_\Omega u^p\,dx \right)^{p+1} 
  \end{align*}
for all $t \in (0,T_{\rm max})$, where 
$c_1:=\big( \mu_1 - \frac{p-1}{p} \cdot \frac{\alpha \chi_1}{d_3} \big)|\Omega|^{-1} > 0$. 
Hence \eqref{Lpbdd of u} results from this.
Next, we consider the case that $\kappa_1 > 2$ and $\lambda_1 > 2$. 
For all $p > \frac{n}{2}$ it follows from Young's inequality that 
  \begin{align}\label{young1}
    \frac{p-1}{p} \cdot \frac{\alpha \chi_1}{d_3}  \int_\Omega u^{p+1}\,dx
    \le \frac{\mu_1}{2} \int_\Omega u^{p+\kappa_1-1}\,dx + c_2
  \end{align}
with some $c_2>0$, and 
  \begin{align}\label{young2}
    \frac{p-1}{p} \cdot \frac{\beta \chi_1}{d_3} \int_\Omega u^p v\,dx 
    \le a_1 \mu_1 \int_\Omega u^p v^{\lambda_1-1}\,dx 
         + c_3 \int_\Omega u^p\,dx
  \end{align}
with some $c_3>0$. 
Combining these inequalities with \eqref{up}, we obtain 
  \[
    \frac{1}{p} \cdot \frac{d}{dt} \int_\Omega u^p\,dx 
    \le c_4 \int_\Omega u^p\,dx 
         - \frac{\mu_1}{2} \int_\Omega u^{p+\kappa_1-1}\,dx
         + c_2
  \]
for all $t \in (0,T_{\rm max})$, where $c_4:=\mu_1+c_3$, 
and this leads to \eqref{Lpbdd of u}. 
Similarly, taking $p$ satisfying \eqref{kappa_1=2} and using \eqref{young1} 
in the case that $\kappa_1=2$ and $\lambda_1>2$, 
and picking $p$ fulfilling \eqref{lambda_1=2} and applying \eqref{young2} 
in the case that $\kappa_1>2$ and $\lambda_1=2$, 
we can confirm that \eqref{Lpbdd of u} holds. 
Moreover, we can show 
that \eqref{Lpbdd of v} holds for all $t \in (0,T_{\rm max})$ with some $q>\frac{n}{2}$ 
by an argument similar to that in the proof of \eqref{Lpbdd of u} 
under the condition \eqref{bddcondi2}.
\end{proof}

With these estimates for $u$ and $v$ at hand, 
we can show Theorem \ref{thm; bdd}.

%================================================================%
%                                        Proof of Theorem 3.1                                        %
%================================================================%
\begin{proof}[Proof of Theorem \ref{thm; bdd}]
Since the proof is similar to that in 
\cite[Lemma 2.4]{M_2018_MMAS}, 
let us briefly state the proof.
By making use of Lemma \ref{lem; Lpbdd}, 
we can establish 
an $L^p$-estimate for $u$ for some $p>\frac{n}{2}$ 
and an $L^q$-estimate for $v$ for some $q>\frac{n}{2}$. 
Thus,  
for all 
$r \in \big(\frac{n}{2}, \min\{p,q\}\big] \cap (0,n)$
there exists $c_1>0$ such that
$\| u(\cdot,t) \|_{L^r(\Omega)} + \| v(\cdot,t) \|_{L^r(\Omega)} \le c_1$
for all $t \in (0,T_{\rm max})$. 
Hence, applying elliptic regularity theory 
(see e.g.\ \cite[Theorem 19.1]{Friedman})
to the third equation in \eqref{P} 
and the Sobolev embedding theorem, we can take $c_2>0$ such that 
  \[
    \| \nabla w(\cdot,t) \|_{L^{\frac{nr}{n-r}}(\Omega)} \le c_2
  \]
for all $t \in (0,T_{\rm max})$,
and 
moreover,
we invoke 
the standard semigroup technique 
(see e.g.\ \cite[Proof of Lemma 3.2]{B-B-T-W})
to find $c_3>0$ such that 
  \[
    \| u(\cdot,t) \|_{L^\infty(\Omega)} 
    + \| v(\cdot,t) \|_{L^\infty(\Omega)} 
    \le c_3
  \]
for all $t \in (0,T_{\rm max})$, which in conjunction with \eqref{criterion} implies the end of the proof.
\end{proof}

%================================================================%
%================================================================%
%                                                Section 4                                                %
%================================================================%
%================================================================%
\section{Finite-time blow-up in a model of Keller--Segel type}\label{sectionKS}
In this section we consider the system \eqref{P} 
in the case that $h$ is of Keller--Segel type 
as follows:
  \begin{align}\label{P1}
    \begin{cases}
      u_t = d_1 \Delta u 
                - \chi_1 \nabla \cdot (u \nabla w) 
                + \mu_1 u (1- u^{\kappa_1-1} - a_1 v^{\lambda_1-1}),
        &\quad x \in \Omega,\ t>0,\\
      v_t = d_2 \Delta v 
                - \chi_2 \nabla \cdot (v \nabla w) 
                + \mu_2 v (1- a_2 u^{\lambda_2-1} - v^{\kappa_2-1}),
        &\quad x \in \Omega,\ t>0,\\
      0 = d_3 \Delta w + \alpha u + \beta v - \gamma w,
        &\quad x \in \Omega,\ t>0,\\
      \nabla u \cdot \nu = \nabla v \cdot \nu = \nabla w \cdot \nu = 0, 
        &\quad x \in \partial \Omega,\ t>0,\\
      u(x,0) = u_0(x), \ 
      v(x,0) = v_0(x), 
        &\quad x \in \Omega,
    \end{cases}
  \end{align}
where $\Omega = B_R(0) \subset \mathbb{R}^n$ $(n\ge3)$ 
is a ball with some $R>0$, and 
$u_0, v_0$ satisfy \eqref{initial} and
  \begin{align}\label{initial1}
    u_0, v_0 \ \mbox{are radially symmetric}. 
  \end{align}
Moreover, let $(u,v,w)$ be the local classical solution of \eqref{P1} 
on $[0,T_{\rm max})$
given in Lemma \ref{localsol}. 
We now state the following theorem which guarantees finite-time blow-up.

%================================================================%
%                                              Theorem 4.1                                              %
%================================================================%
\begin{thm}\label{thm; blow-up_KS}
Let $\Omega = B_R(0) \subset \mathbb{R}^n$ $(n\ge3)$ 
and
let $d_1, d_2, d_3, \chi_1, \chi_2, \mu_1, \mu_2, 
a_1, a_2>0$ and $\alpha, \beta, \gamma>0$ 
as well as $\kappa_1,\kappa_2,\lambda_1,\lambda_2>1$.
Assume that $\kappa_1, \kappa_2, \lambda_1$ and $\lambda_2$ satisfy that 
      \begin{align}\label{KSblowup}
        \max\{ \kappa_1, \lambda_1, \kappa_2, \lambda_2 \}
        < 
        \begin{cases}
          \frac{7}{6} &\mbox{if}\ n \in \{3,4\},\\
          1+\frac{1}{2(n-1)} &\mbox{if}\ n \ge5.
        \end{cases}
      \end{align}
    Then, for all $L>0$, $M_0>0$ and $\widetilde{M}_0 \in (0,M_0)$ 
    there exists $r_\star \in (0,R)$ with the following property\/{\rm :}
    If $u_0, v_0$ satisfy \eqref{initial}, \eqref{initial1} and
      \begin{align}\label{initial KS} 
        \int_\Omega ( u_0(x) + v_0(x) )\,dx = M_0 
        \quad \mbox{and} \quad 
        \int_{B_{r_\star}(0)} ( u_0(x) + v_0(x) )\,dx \ge \widetilde{M}_0
      \end{align}
    as well as
      \begin{align}\label{initial_profile}
        u_0(x) + v_0(x) \le L |x|^{-n(n-1)} \quad\mbox{for all}\ x \in \Omega,
      \end{align}
    then the corresponding solution $(u,v,w)$ of \eqref{P1}  
    blows up in finite time in the sense that $T_{\rm max}<\infty$ and 
      \begin{align}\label{blowupKS}
        \lim_{t \nearrow T_{\rm max}} 
        ( \| u(\cdot,t) \|_{L^\infty(\Omega)} + \| v(\cdot,t) \|_{L^\infty(\Omega)} )
        = \infty. 
      \end{align}
\end{thm}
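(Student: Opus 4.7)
The plan is to adapt Winkler's moment-based blow-up scheme from \cite{W-2018} to the coupled two-species setting. First I would pass to the radial cumulative-mass variables
\[
U(s,t) = \int_0^{s^{1/n}} \rho^{n-1} u(\rho,t)\,d\rho, \qquad V(s,t) = \int_0^{s^{1/n}} \rho^{n-1} v(\rho,t)\,d\rho,
\]
with $s = r^n \in (0,R^n)$, so that $u$ and $v$ are recovered as $n U_s$ and $n V_s$. Integrating the elliptic equation $0 = d_3 \Delta w + \alpha u + \beta v - \gamma w$ twice in $r$ furnishes an explicit representation of $r^{n-1} w_r$ in terms of $U$, $V$, and a remainder involving $\int_0^r \rho^{n-1} w\,d\rho$. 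Substituting this into the first two equations of \eqref{P1} and integrating once more in $r$ yields transport--diffusion equations for $U$ and $V$ on $(0, R^n)$, in which the chemotactic self-interaction appears as a quadratic nonlocal term schematically of the form $\tfrac{\chi_1}{d_3} U_s(\alpha U + \beta V)$, while the logistic contributions produce the competition terms $\int_0^s U_s(\sigma,t) V_s^{\lambda_1-1}(\sigma,t)\,d\sigma$ together with their self-analogues.

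Second, I would introduce the moments
\[
\phi_U(t) = \int_0^{s_0} s^{-b}(s_0-s) U(s,t)\,ds, \qquad \phi_V(t) = \int_0^{s_0} s^{-b}(s_0-s) V(s,t)\,ds
\]
with $s_0 \in (0,R^n)$ and $b \in (0,1)$ to be chosen, and compute $(\phi_U+\phi_V)'(t)$. After integration by parts the diffusion contribution becomes a lower-order perturbation, while the chemotactic cross-term, combined with the pointwise lower bound on $U+V$ ensured by the concentration hypothesis $\int_{B_{r_\star}(0)}(u_0+v_0)\,dx \ge \widetilde{M}_0$, produces a positive superlinear contribution of the order $(\phi_U + \phi_V)^{1+\delta}$ for some $\delta > 0$. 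The target is an ODI
\[
\phi_U'(t) + \phi_V'(t) \ge c_\star (\phi_U + \phi_V)^{1+\delta} - c_\sharp
\]
on a short time interval $[0, T_\star)$.

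The main obstacle is dominating the two competition integrals
\[
\int_0^{s_0} s^{-b}(s_0-s) \int_0^s U_s(\sigma,t) V_s^{\lambda_1-1}(\sigma,t)\,d\sigma\,ds, \qquad \int_0^{s_0} s^{-b}(s_0-s) \int_0^s V_s(\sigma,t) U_s^{\lambda_2-1}(\sigma,t)\,d\sigma\,ds,
\]
which a priori could swamp the superlinear chemotactic driver. To control them I would first prove a pointwise upper estimate in the spirit of \cite[Lemma 3.3]{W-2018} (to be stated as Lemma~\ref{lemprofile}): propagating the initial bound \eqref{initial_profile} through the coupled system via a comparison argument in the $(U,V)$-formulation yields $u(x,t) + v(x,t) \le C|x|^{-n(n-1)}$ on $\Omega \times [0, T_\star)$ for some $T_\star > 0$. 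Substituting this bound into one factor of $U_s$ or $V_s$ inside each competition integral reduces the remaining mass to an expression controlled by $\phi_U$ or $\phi_V$, and under the hypothesis \eqref{KSblowup} that $\max\{\kappa_1,\lambda_1,\kappa_2,\lambda_2\}$ lies strictly below $\tfrac{7}{6}$ (for $n\in\{3,4\}$) or $1+\tfrac{1}{2(n-1)}$ (for $n\ge 5$), the exponent count keeps these contributions strictly subordinate to $(\phi_U+\phi_V)^{1+\delta}$, so that they can be absorbed into $c_\sharp$.

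Finally, with the ODI in place, I would fix $r_\star$ so small that $\phi_U(0)+\phi_V(0)$ exceeds the threshold $(c_\sharp/c_\star)^{1/(1+\delta)}$; standard ODE comparison then forces $\phi_U + \phi_V$ to blow up at some finite time not exceeding $T_\star$. Since $\phi_U+\phi_V$ is controlled by $\|u(\cdot,t)\|_{L^\infty(\Omega)} + \|v(\cdot,t)\|_{L^\infty(\Omega)}$, the extensibility criterion \eqref{criterion} yields $T_{\rm max} < \infty$ together with \eqref{blowupKS}. I expect the hardest step to be Lemma~\ref{lemprofile}: because the $u$- and $v$-equations are coupled both through $w$ and through the competition terms, the scalar comparison argument of \cite{W-2018} must be re-engineered to simultaneously control $U$ and $V$, and it is precisely at this step that the restriction on $\kappa_i, \lambda_i$ enters most delicately in order to keep the competition contributions from destroying the profile bound.
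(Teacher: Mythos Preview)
Your overall architecture---pass to the cumulative masses $U,V$, form the weighted moments $\phi_U,\phi_V$ with weight $s^{-b}(s_0-s)$, derive a superlinear ODI for $\phi_U+\phi_V$, and close with an ODE comparison---is exactly the paper's. Two points in your outline, however, misplace the real work.

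First, the pointwise profile bound (your Lemma~\ref{lemprofile}) does \emph{not} require the exponent hypothesis \eqref{KSblowup}, and the paper does not prove it by a coupled comparison in the $(U,V)$-variables. Instead one observes that $\tilde u:=e^{-\mu_1 t}u$ satisfies $\tilde u_t\le d_1\Delta\tilde u-\chi_1\nabla\cdot(\tilde u\nabla w)$, and that the elliptic equation together with the $L^1$ mass bound (Lemma~\ref{L1esti}) gives $|r^{n-1}w_r|\le C$ uniformly on $(0,\min\{1,T_{\rm max}\})$. With this drift control and the initial bound \eqref{initial_profile}, the scalar result \cite[Theorem~1.1]{F-2020_profiles} applies to $\tilde u$ and $\tilde v$ \emph{separately}; the coupling enters only through the drift estimate on $w_r$. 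So the ``hardest step'' you anticipate is in fact routine once the right black box is invoked, and no restriction on $\kappa_i,\lambda_i$ is used there.

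Second, the place where \eqref{KSblowup} really bites is the moment estimate itself, and the mechanism is more specific than your ``exponent count''. The paper introduces the auxiliary quantities
\[
\psi_U(t)=\int_0^{s_0}s^{-b}(s_0-s)U(s,t)U_s(s,t)\,ds,\qquad \psi_V(t)=\int_0^{s_0}s^{-b}(s_0-s)V(s,t)V_s(s,t)\,ds,
\]
and the chemotactic terms produce $+c(\psi_U+\psi_V)$ in $(\phi_U+\phi_V)'$. The diffusive, damping, and competition contributions are each bounded below by $-C s_0^{\theta}\sqrt{\psi_U}$ or $-C s_0^{\theta}\sqrt{\psi_V}$ with $\theta>0$, and this is where \eqref{KSblowup} enters: one needs to choose $b\in\bigl(1-\tfrac{2}{n},\min\{1,2-\tfrac{4}{n}\}\bigr)$ with $(n-1)(\max\{\kappa_1,\lambda_1,\kappa_2,\lambda_2\}-1)<\tfrac{b}{2}$, and the right-hand side of \eqref{KSblowup} is exactly what makes such a $b$ exist. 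After Young's inequality and the bound $\psi_U\ge c\,s_0^{-(3-b)}\phi_U^2$ (and likewise for $V$), one arrives at the quadratic ODI $(\phi_U+\phi_V)'\ge c_1 s_0^{-(3-b)}(\phi_U+\phi_V)^2-c_2 s_0^{1-b+2/n}$, i.e.\ $\delta=1$ rather than an unspecified $\delta>0$. Your proposal would go through once you insert the $\psi$-functions as the pivot and relocate the role of \eqref{KSblowup} from the profile lemma to the choice of $b$.
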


\begin{remark}
  If $v=0$ in \eqref{P1}, then $\lambda_1, \kappa_2$ and $\lambda_2$ can be
  taken smaller than $\kappa_1$, and thus \eqref{KSblowup} is rewritten as
  \[
    \kappa_1< 
      \begin{cases}
        \frac{7}{6} &\mbox{if}\ n \in \{3,4\},\\
        1+\frac{1}{2(n-1)} &\mbox{if}\ n \ge5.
      \end{cases}
  \]
  This is the same condition as in \cite[(1.4)]{W-2018}, 
  which means that the result in \cite[Theorem 1.1]{W-2018} 
  is generalized to that in the two-species model.
\end{remark}

The proof of Theorem \ref{thm; blow-up_KS} is based on \cite{W-2018}. 
In what follows,
we regard $u(x,t), v(x,t)$ and $w(x,t)$ 
as functions of $r:=|x|$ and $t$.
Also, we introduce the mass accumulation functions $U, V$ and $W$ as 
  \begin{align*}
    U(s,t) &:= \int^{s^\frac{1}{n}}_0 \rho^{n-1} u(\rho,t)\,d\rho,
    \\ 
    V(s,t) &:= \int^{s^\frac{1}{n}}_0 \rho^{n-1} v(\rho,t)\,d\rho
  \end{align*}
and
  \[
    W(s,t) := \int^{s^\frac{1}{n}}_0 \rho^{n-1} w(\rho,t)\,d\rho
  \]
for $s \in [0,R^n]$ and $t \in [0,T_{\rm max})$. 
Moreover, we define the functions $\phi_U$, $\phi_V$, $\psi_U$ and $\psi_V$ as 
  \begin{align*}
    \phi_U(t) &:= \int^{s_0}_{0} s^{-b}(s_0-s) U(s,t)\,ds,\\
    \phi_V(t) &:= \int^{s_0}_{0} s^{-b}(s_0-s) V(s,t)\,ds
  \end{align*}
and
  \begin{align*}
    \psi_U(t) &:= \int^{s_0}_0 s^{-b}(s_0-s) U(s,t) U_s(s,t)\,ds,\\
    \psi_V(t) &:= \int^{s_0}_0 s^{-b}(s_0-s) V(s,t) V_s(s,t)\,ds
  \end{align*}
for $t \in [0,T_{\rm max})$ 
with suitably chosen $s_0 \in (0,R^n)$ and $b \in (0,1)$. 
Here, we note that $\phi_U, \phi_V \in C^0([0,T_{\rm max})) \cap C^1((0,T_{\rm max}))$.

To prove Theorem \ref{thm; blow-up_KS}, 
we will establish an ordinary differential inequality for $\phi_U+\phi_V$. 
As a preparation of the proof, 
we show pointwise upper estimates for $u$ and $v$.

%================================================================%
%                                                Lemma 4.2                                               %
%================================================================%
\begin{lem}\label{lemprofile}
For all $L>0$, $M_0>0$ and $\ep>0$ there exist 
$C_1>0$ and $C_2>0$ such that
if $u_0, v_0$ satisfy \eqref{initial}, \eqref{initial1} and 
$\int_\Omega( u_0(x) + v_0(x) )\,dx = M_0$ 
as well as \eqref{initial_profile}, then 
  \begin{align}\label{profile}
    u(r,t) \le C_1 r^{-n(n-1)-\ep}
    \quad \mbox{and} \quad 
    v(r,t) \le C_2 r^{-n(n-1)-\ep}
  \end{align}
for all $r \in (0,R)$ and $t \in (0,\min\{1,T_{\rm max}\})$.
\end{lem}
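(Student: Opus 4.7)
The plan is to adapt the argument of \cite[Lemma 3.3]{W-2018} to the two-species setting. The strategy is to construct, on the time interval $(0, \min\{1, T_{\rm max}\})$, an explicit pointwise supersolution for each of $u$ and $v$ with the prescribed profile $r^{-n(n-1)-\ep}$, and then close the argument by a parabolic comparison.

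First I would note that $w \ge 0$ on $\Omega \times (0, T_{\rm max})$, which follows from the elliptic comparison principle applied to $0 = d_3 \Delta w + \alpha u + \beta v - \gamma w$ with $\alpha u + \beta v \ge 0$ and homogeneous Neumann data. Using this elliptic identity to eliminate $\Delta w$ from the first equation in \eqref{P1} and dropping the non-positive reaction terms $-\mu_1 u^{\kappa_1}$, $-\mu_1 a_1 u v^{\lambda_1 - 1}$ and $-\tfrac{\chi_1 \gamma}{d_3} uw$ yields the pointwise differential inequality
$$u_t \le d_1 \Delta u - \chi_1 \nabla u \cdot \nabla w + \tfrac{\chi_1 \alpha}{d_3} u^2 + \tfrac{\chi_1 \beta}{d_3} uv + \mu_1 u,$$
and an analogous inequality for $v$. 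Moreover, radial integration of the elliptic equation expresses $w_r(r,t)$ explicitly in terms of $U$, $V$ and $W$; combined with the mass bounds of Lemma \ref{L1esti} (which are uniform on $t \in (0, \min\{1, T_{\rm max}\})$) and an $L^\infty$ control on $w$ from standard elliptic regularity applied to the third equation, this gives a pointwise estimate $|w_r(r,t)| \le C r^{1-n}$.

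Next I would introduce a supersolution ansatz of the form $\bar z(r,t) = A r^{-\theta}(1 + B t)$ with $\theta = n(n-1) + \ep$, choosing $A > 0$ large so that $\bar z(\cdot,0)$ dominates both $u_0$ and $v_0$ on $\overline{\Omega}$ (possible since $u_0 + v_0 \le L r^{-n(n-1)} \le A R^\ep r^{-\theta}$ by \eqref{initial_profile} for $A \ge L R^\ep$) and $B > 0$ large enough for the parabolic inequality to close. A direct calculation yields $\Delta \bar z = A \theta(\theta - n + 2)(1+Bt) r^{-\theta - 2}$, with $\theta - n + 2 = (n-1)^2 + 1 + \ep > 0$. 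The specific exponent $\theta = n(n-1) + \ep$ is dictated by the algebraic balance between the stabilizing transport term $-\chi_1 \bar z_r w_r$, which with $|w_r| \le C r^{1-n}$ is of order $r^{-\theta - n}$, and the destabilizing superlinear term $\tfrac{\chi_1 \alpha}{d_3} \bar z^2$ of order $r^{-2\theta}$, near $r = 0$.

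The main obstacle, in contrast with the one-species analysis in \cite{W-2018}, is the coupling through the cross term $uv$ in the $u$-equation (and the symmetric term in the $v$-equation), so that the bounds on $u$ and $v$ must be propagated simultaneously. I would handle this by performing the comparison jointly: at the first hypothetical crossing time $t_0 \in (0, \min\{1, T_{\rm max}\})$ at which either $u(r_0, t_0) = \bar z(r_0, t_0)$ or $v(r_0, t_0) = \bar z(r_0, t_0)$ for some $r_0 \in (0, R]$, the cross term satisfies $uv \le \bar z^2$ at $(r_0, t_0)$, so evaluating the above differential inequality at the extremum and using the signs of the first- and second-order spatial derivatives forced by the extremality of $r_0$ contradicts the supersolution identity. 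This yields $u \le \bar z$ and $v \le \bar z$ on the stipulated time interval, which is exactly \eqref{profile} with $C_1 = C_2 = A(1+B)$.
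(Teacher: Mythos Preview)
Your comparison ansatz $\bar z(r,t)=A r^{-\theta}(1+Bt)$ with $\theta=n(n-1)+\ep$ cannot be a supersolution of the inequality you derive. Once you expand $-\chi_1\nabla\cdot(u\nabla w)$ and substitute for $\Delta w$, the right-hand side contains the genuinely quadratic term $\tfrac{\chi_1(\alpha+\beta)}{d_3}\bar z^{\,2}\sim r^{-2\theta}$, and since $\theta=n(n-1)+\ep>n$ for $n\ge 3$ this is strictly more singular near $r=0$ than every other term: $\bar z_t\sim r^{-\theta}$, $d_1\Delta\bar z\sim r^{-\theta-2}$, and the transport contribution is at most of order $r^{-\theta-n}$. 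The ``balance'' you invoke between the transport term and $\bar z^{\,2}$ would require $\theta+n=2\theta$, i.e.\ $\theta=n$, not $\theta=n(n-1)+\ep$; and in any case $|w_r|\le Cr^{1-n}$ is only an \emph{upper} bound on $|w_r|$, so the transport term has no guaranteed favorable sign or size that could absorb $\bar z^{\,2}$. Even ignoring the quadratic term, $d_1\Delta\bar z=d_1A\theta(\theta-n+2)(1+Bt)r^{-\theta-2}>0$ already beats $\bar z_t$ as $r\to 0$. Hence at a first touching point with $r_0$ small the required supersolution inequality fails, and the argument does not close.

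The paper avoids this by \emph{not} expanding the drift term. Setting $\tilde u:=e^{-\mu_1 t}u$ and dropping only the nonpositive reaction terms yields
\[
\tilde u_t \le d_1\Delta\tilde u-\chi_1\nabla\cdot(\tilde u\,\nabla w),
\]
which is \emph{linear} in $\tilde u$ with $\nabla w$ treated as a given external drift. The mass bound of Lemma~\ref{L1esti} gives $|r^{n-1}w_r|\le c$ on $(0,\min\{1,T_{\rm max}\})$, hence $\int_\Omega |x|^{(n-1)\theta}|\nabla w|^\theta\,dx\le c_1$ for any $\theta>n$; choosing $\theta$ large so that $n(n-1)+\ep>\tfrac{n(n-1)\theta}{\theta-n}$, the profile estimate then follows from the linear drift--diffusion result \cite[Theorem~1.1]{F-2020_profiles}. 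The point is that the quadratic self-interaction you are fighting is an artifact of the expansion; keeping the divergence form makes the problem linear and amenable to the black-box profile bound.
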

%------------------------------proof------------------------------%
\begin{proof}
We put $\tilde{u}(x,t) := e^{-\mu_1 t}u(x,t)$. 
Then we see from \eqref{P1} that 
  \begin{align}\label{profilecondi1}
    \begin{cases}
      \tilde{u}_t \le d_1 \Delta \tilde{u} - \chi_1 \nabla \cdot (\tilde{u} \nabla w), 
        &\quad x \in \Omega, t>0,\\
      \nabla \tilde{u} \cdot \nu = \nabla w \cdot \nu =0,
        &\quad x \in \partial\Omega, t>0,\\
      \tilde{u}(x,0) = u_0(x),
        &\quad x \in \Omega.
    \end{cases}
  \end{align}
Also, we have $\int_\Omega \tilde{u}(\cdot,0)\,dx \le M_0$. 
Next we verify that 
  \begin{align}\label{profilecondi2}
    \int_\Omega |x|^{(n-1)\theta} |\nabla w(x,t)|^\theta \,dx \le c_1
  \end{align}
for all $t \in (0,\min\{1,T_{\rm max}\})$ with some $\theta>n$ and $c_1>0$.
Since the third equation in \eqref{P1} yields 
$\int_\Omega (\alpha u + \beta v)\,dx = \gamma \int_\Omega w\,dx$, 
we can observe from the third equation in \eqref{P1} that
  \[
    |r^{n-1} w_r| 
    = \left| 
         - \frac{1}{d_3}  \int^r_0 \rho^{n-1} (\alpha u + \beta v - \gamma w)\,d\rho 
      \right|
    \le \frac{2}{d_3} \int^{R}_0 \rho^{n-1} (\alpha u + \beta v)\,d\rho.
  \]
Estimating the right-hand side by Lemma \ref{L1esti} and 
the condition $\int_\Omega ( u_0 + v_0 )\,dx = M_0$, we see that
for all $r \in (0,R)$ and $t \in (0,\min\{1,T_{\rm max}\})$,
  \begin{align}\label{rn-1wr}
    |r^{n-1} w_r| \le \frac{2 (\alpha e^{\mu_1} + \beta e^{\mu_2}) M_0}{d_3 \omega_n}=:c_2, 
  \end{align}
where $\omega_n:=n|B_1(0)|$.
Given $\ep>0$, let us choose $\theta>n$ so large such that 
  \begin{align}\label{profilecondi3}
    n(n-1)+\ep>\frac{n(n-1)\theta}{\theta-n}.
  \end{align}
Then, due to \eqref{rn-1wr} we can derive 
\eqref{profilecondi2} with $c_1=c_2^\theta|\Omega|$. 
Thanks to \eqref{initial_profile}, \eqref{profilecondi1}, 
\eqref{profilecondi2} and \eqref{profilecondi3}, 
we can apply \cite[Theorem 1.1]{F-2020_profiles} 
to find $c_3>0$ such that $\tilde{u}(x,t) \le c_3|x|^{-n(n-1)-\ep}$ 
for all $x \in \Omega$ and $t \in (0,\min\{1,T_{\rm max}\})$, 
which means the first claim in \eqref{profile}. 
Similarly, the second claim in \eqref{profile} can be obtained.
\end{proof}

We next derive differential inequalities for $\phi_U$ and $\phi_V$. 
The following lemma is based on \cite[Lemmas 4.1, 4.3 and 4.5]{W-2018}.

%================================================================%
%                                                Lemma 4.3                                               %
%================================================================%
\begin{lem}\label{KSlem4.3}
Let $s_0 \in (0,R^n)$ and 
let $b \in \big( 1-\frac{2}{n}, \min\{1,2-\frac{4}{n}\} \big)$
satisfy
  \begin{align}\label{kappa1kappa2}
    (n-1)(\max\{\kappa_1,\kappa_2\}-1)<\frac{b}{2}.
  \end{align}
Then, for all $L>0$, $M_0>0$ and $\ep>0$ there exist 
$C_1>0$, $C_2>0$, $C_3>0$ and $C_4>0$ such that
if $u_0, v_0$ satisfy \eqref{initial}, \eqref{initial1} and 
$\int_\Omega( u_0(x) + v_0(x) )\,dx = M_0$ 
as well as \eqref{initial_profile},
then 
  \begin{align}\label{basic_phi_U'_KS}
    \phi_U'(t) &\ge - C_1 s_0^{\frac{3-b}{2}-\frac{2}{n}} \sqrt{\psi_U(t)}
  \notag \\ 
  &\quad\,
                          + \frac{\alpha \chi_1 n}{d_3} \psi_U(t)
                          - (b+1)\frac{\gamma \chi_1 n}{d_3} s_0 \int^{s_0}_0 s^{-b-1} U(s,t)W(s,t)\,ds 
  \notag \\ 
  &\quad\,
                          - C_2 s_0^{-(n-1)(\kappa_1-1)+\frac{3-b}{2}-\ep} \sqrt{\psi_U(t)} 
  \notag \\ 
  &\quad\,
                          - a_1 \mu_1 n^{\lambda_1-1} 
                               \int^{s_0}_0 s^{-b}(s_0-s) 
                                 \left( \int^s_0 U_s(\sigma,t)V_s^{\lambda_1-1}(\sigma,t)\,d\sigma \right)\,ds
  \end{align}
and
  \begin{align}\label{basic_phi_V'_KS}
    \phi_V'(t) &\ge - C_3 s_0^{\frac{3-b}{2}-\frac{2}{n}} \sqrt{\psi_V(t)}
  \notag \\ 
  &\quad\,
                          + \frac{\beta \chi_2 n}{d_3} \psi_V(t)
                          - (b+1)\frac{\gamma \chi_2 n}{d_3} s_0 \int^{s_0}_0 s^{-b-1} V(s,t)W(s,t)\,ds 
  \notag \\ 
  &\quad\,
                          - C_4 s_0^{-(n-1)(\kappa_2-1)+\frac{3-b}{2}-\ep} \sqrt{\psi_V(t)} 
  \notag \\ 
  &\quad\,
                          - a_2 \mu_2 n^{\lambda_2-1} 
                               \int^{s_0}_0 s^{-b}(s_0-s) 
                                 \left( \int^s_0 V_s(\sigma,t)U_s^{\lambda_2-1}(\sigma,t)\,d\sigma \right)\,ds
  \end{align}
for all $t \in (0,\min\{1,T_{\rm max}\})$.
\end{lem}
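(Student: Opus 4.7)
My plan follows the approach of Winkler \cite[Lemmas 4.1, 4.3, 4.5]{W-2018}, adapted to the two-species setting; I describe only the derivation of \eqref{basic_phi_U'_KS}, since \eqref{basic_phi_V'_KS} follows by the same procedure applied to the equation for $v$. First I rewrite the first equation of \eqref{P1} in terms of $U$ via the change of variable $s=r^n$. A direct computation based on $U_s(s,t)=\frac{1}{n}u(s^{1/n},t)$, together with the analogous identities for $V$ and $W$, yields
$$
U_t = d_1 n^2 s^{2-\frac{2}{n}} U_{ss} - \chi_1 n^3 s^{2-\frac{2}{n}} U_s W_{ss} + \mu_1 U - \mu_1 \int_0^s u^{\kappa_1-1}(\sigma^{1/n},t)\,U_s(\sigma,t)\,d\sigma - a_1 \mu_1 n^{\lambda_1-1} \int_0^s U_s V_s^{\lambda_1-1}\,d\sigma,
$$
where I used $\sigma = \rho^n$ in the cross-competition term. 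Solving the transformed third equation $d_3 n^2 s^{2-2/n} W_{ss}+\alpha U+\beta V-\gamma W=0$ for $W_{ss}$ converts the chemotactic term into $\frac{n\chi_1}{d_3}U_s(\alpha U+\beta V-\gamma W)$.

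I then multiply by $s^{-b}(s_0-s)$ and integrate over $(0,s_0)$. The $\alpha U$-contribution gives precisely $\frac{\alpha\chi_1 n}{d_3}\psi_U$, while the $\beta V$-contribution is nonnegative (as $U_s,V\ge 0$) and may be dropped. For the $\gamma W$-term I integrate by parts, noting $W_s=w/n\ge 0$ and $s\le s_0$, to obtain the upper bound
$$
\int_0^{s_0}\!\!s^{-b}(s_0-s)U_s W\,ds \le b\!\int_0^{s_0}\!\! s^{-b-1}(s_0-s)UW\,ds + \!\int_0^{s_0}\!\! s^{-b} UW\,ds \le (b+1)s_0\!\int_0^{s_0}\!\! s^{-b-1}UW\,ds,
$$
which upon multiplication by the negative coefficient produces the third term of \eqref{basic_phi_U'_KS}. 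The diffusion integral $d_1 n^2\int_0^{s_0} s^{2-\frac{2}{n}-b}(s_0-s)U_{ss}\,ds$ is treated as in \cite[Lemma 4.1]{W-2018}: the boundary terms after integration by parts vanish because $b<2-\frac{2}{n}$ at $s=0$ and $(s_0-s)$ vanishes at $s=s_0$, and a subsequent Cauchy--Schwarz step against the weight of $\psi_U$ yields the contribution $-C_1 s_0^{(3-b)/2-2/n}\sqrt{\psi_U}$.

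It remains to deal with the logistic integrals. The positive $\mu_1 U$-term is discarded. For the superlinear absorption $-\mu_1\int_0^s u^{\kappa_1-1}(\sigma^{1/n},t)\,U_s(\sigma,t)\,d\sigma$, I invoke the pointwise profile bound $u(\rho,t)\le C\rho^{-n(n-1)-\ep'}$ from Lemma \ref{lemprofile} (with $\ep'$ arbitrarily small), exchange the order of integration in $(s,\sigma)$, and apply Cauchy--Schwarz with the weight $s^{-b}(s_0-s)U$ of $\psi_U$, exactly as in \cite[Lemma 4.3]{W-2018}; the hypothesis \eqref{kappa1kappa2} ensures that the resulting powers of $s$ are integrable at the origin and that a favourable power of $s_0$ survives, giving $-C_2 s_0^{-(n-1)(\kappa_1-1)+(3-b)/2-\ep}\sqrt{\psi_U}$. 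The inter-species competition integral is left in the stated form. Assembling the estimates gives \eqref{basic_phi_U'_KS}. The main obstacle is this last absorption estimate, where the interplay between the pointwise profile of $u$, the Cauchy--Schwarz weight, and the two-sided constraints on $b$ forces the smallness condition \eqref{kappa1kappa2}; the remaining ingredients are straightforward extensions of the one-species arguments of \cite{W-2018}.
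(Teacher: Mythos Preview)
Your proposal is correct and follows essentially the same route as the paper's proof: derive the equation for $U_t$ from \eqref{P1} and the third equation, drop the nonnegative $\frac{\beta\chi_1 n}{d_3}U_sV$ and $\mu_1 U$ contributions, bound the diffusion integral via integration by parts plus Cauchy--Schwarz as in Winkler, treat the $-\frac{\gamma\chi_1 n}{d_3}U_sW$ term by the integration-by-parts estimate \cite[(4.5)]{W-2018}, and control the $U_s^{\kappa_1}$-absorption by Fubini together with the profile bound of Lemma~\ref{lemprofile} and the Cauchy--Schwarz technique of \cite[Lemma~4.5]{W-2018}. The only discrepancies are cosmetic: you write the chemotactic term via $W_{ss}$ before substituting the third equation whereas the paper substitutes directly, and the Winkler lemma numbers you cite (4.1, 4.3) do not quite match the paper's references (estimate~(4.4), Lemmas~4.3 and~4.5); mathematically the steps are the same.
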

%------------------------------proof------------------------------%
\begin{proof} 
We first derive a differential inequality for $\phi_U$ 
from the first and third equations in \eqref{P1}. 
The first equation in \eqref{P1} is rewritten as 
  \begin{align}\label{ut} 
    u_t = d_1 r^{1-n}(r^{n-1} u_r)_r 
            - \chi_1 r^{1-n}(r^{n-1} u w_r)_r 
            + \mu_1 u (1 - u^{\kappa_1-1} - a_1 v^{\lambda_1-1}) 
  \end{align} 
for all $r \in (0,R)$ and $t \in (0,T_{\rm max})$. 
Also, we multiply the third equation in \eqref{P1} by $r^{n-1}$ 
and integrate it over $[0,s^{\frac{1}{n}}]$
to obtain
  \begin{align}\label{thirdeqKS}
    s^{1-\frac{1}{n}} w_r(s^{\frac{1}{n}},t) 
    = - \frac{\alpha}{d_3} U(s,t)
       - \frac{\beta}{d_3} V(s,t) 
       + \frac{\gamma}{d_3} W(s,t). 
  \end{align}
Thus, noticing that
$U_s(s,t) = \frac{1}{n} u(s^{\frac{1}{n}},t)$, 
$V_s(s,t) = \frac{1}{n} v(s^{\frac{1}{n}},t)$,
$W_s(s,t) = \frac{1}{n} w(s^{\frac{1}{n}},t)$
and
$U_{ss}(s,t) = \frac{1}{n^2} s^{\frac{1}{n}-1} u_r(s^{\frac{1}{n}},t)$,
we can verify from \eqref{ut} and \eqref{thirdeqKS} that
  \begin{align*}
    U_t &= d_1 n^2 s^{2-\frac{2}{n}} U_{ss} 
              + \frac{\alpha \chi_1 n}{d_3} UU_s 
              + \frac{\beta \chi_1 n}{d_3} U_sV 
              - \frac{\gamma \chi_1 n}{d_3} U_sW
  \\ 
  &\quad\,
              + \mu_1 U 
              - \mu_1 n^{\kappa_1-1} \int^s_0 U_s^{\kappa_1}(\sigma,t)\,d\sigma 
              - a_1 \mu_1 n^{\lambda_1-1} \int^s_0 U_s(\sigma,t)V_s^{\lambda_1-1}(\sigma,t)\,d\sigma
  \end{align*}
for all $s \in (0,R^n)$ and $t \in (0,T_{\rm max})$. 
In light of this identity we infer that 
  \begin{align}\label{dif_phi_U}
    \phi_U'(t) &\ge d_1 n^2 \int^{s_0}_0 s^{2-\frac{2}{n}-b}(s_0-s) U_{ss}\,ds
  \notag \\ 
  &\quad\,
                          + \frac{\alpha \chi_1 n}{d_3} 
                             \psi_U(t)
                          - \frac{\gamma \chi_1 n}{d_3}  \int^{s_0}_0 s^{-b}(s_0-s) U_sW\,ds
  \notag \\ 
  &\quad\,
                          - \mu_1 n^{\kappa_1-1}
                               \int^{s_0}_0 s^{-b}(s_0-s) 
                                 \left( \int^s_0 U_s^{\kappa_1}(\sigma,t)\,d\sigma \right)\,ds
  \notag \\ 
  &\quad\,
                          - a_1 \mu_1 n^{\lambda_1-1} 
                               \int^{s_0}_0 s^{-b}(s_0-s) 
                                 \left( \int^s_0 U_s(\sigma,t)V_s^{\lambda_1-1}(\sigma,t)\,d\sigma \right)\,ds
  \notag \\ 
                  &=: I_1 + I_2 + I_3 + I_4 + I_5 
  \end{align}
for all $t \in (0,T_{\rm max})$.
We next show estimates for $I_1, I_3$ and $I_4$. 
By arguments similar to those in \cite[estimate (4.4) and Lemma 4.3]{W-2018}, 
it follows that 
  \begin{align}\label{firstterm}
    I_1 \ge - c_1 s_0^{\frac{3-b}{2}-\frac{2}{n}} \sqrt{\psi_U(t)}
  \end{align}
for all $t \in (0,T_{\rm max})$ with some $c_1>0$.
As to $I_4$, by Fubini's theorem we deduce that
  \begin{align}\label{fubini}
      I_4 &= - \mu_1 n^{\kappa_1-1} 
                  \int^{s_0}_0 \left( \int^{s_0}_\sigma s^{-b}(s_0-s)\,ds \right)
                  U_s^{\kappa_1}(\sigma,t)\,d\sigma
    \notag \\
    &\ge - \mu_1 n^{\kappa_1-1}
               \int^{s_0}_0 \left( \int^{s_0}_\sigma s^{-b}\,ds \right) 
               (s_0-\sigma) U_s^{\kappa_1}(\sigma,t)\,d\sigma
    \notag \\
    &\ge - \frac{\mu_1 n^{\kappa_1-1}}{1-b}
               s_0^{1-b} \int^{s_0}_0 (s_0-s) U_s^{\kappa_1}(s,t)\,ds, 
  \end{align}
so that by virtue of \eqref{kappa1kappa2} 
we apply \cite[Lemma 4.5]{W-2018} to find $c_2>0$ such that 
  \begin{align}\label{fifthterm}
    I_4 \ge - c_2 s_0^{-(n-1)(\kappa_1-1)+\frac{3-b}{2}-\ep} \sqrt{\psi_U(t)} 
  \end{align}
for all $t \in (0,\min\{1,T_{\rm max}\})$.
With regard to $I_3$, as in \cite[estimate (4.5)]{W-2018}, 
by integration by parts we can make sure that
  \begin{align}\label{fourthterm} 
    I_3 \ge - (b+1)\frac{\gamma \chi_1 n}{d_3} s_0 \int^{s_0}_0 s^{-b-1} UW\,ds
  \end{align}
for all $t \in (0,T_{\rm max})$. 
By \eqref{dif_phi_U}, \eqref{firstterm}, 
\eqref{fifthterm} and \eqref{fourthterm} we arrive at \eqref{basic_phi_U'_KS}.
The inequality \eqref{basic_phi_V'_KS} is derived by a similar argument.
\end{proof}

We next present an estimate for the integrals involving $W$ 
in \eqref{basic_phi_U'_KS} and \eqref{basic_phi_V'_KS}. 
Since the following lemma has been obtained in \cite[Lemma 4.4]{TQ_2020}, 
we provide only the statement of the lemma. 

%================================================================%
%                                                Lemma 4.4                                               %
%================================================================%
\begin{lem}
Let $s_0 \in (0,R^n)$ and $b \in (0,\min\{1,2-\frac{4}{n}\})$. 
For all $L>0$ and $M_0>0$ there exists $C>0$ such that
if $u_0, v_0$ satisfy \eqref{initial}, \eqref{initial1} and 
$\int_\Omega( u_0(x) + v_0(x) )\,dx = M_0$ 
as well as \eqref{initial_profile},
then 
  \begin{align*}
    &- (b+1)\frac{\gamma \chi_1 n}{d_3} s_0 \int^{s_0}_0 s^{-b-1} U(s,t)W(s,t)\,ds
      - (b+1)\frac{\gamma \chi_2 n}{d_3} s_0 \int^{s_0}_0 s^{-b-1} V(s,t)W(s,t)\,ds
  \notag \\
  &\quad\,
    \ge - C s_0^{1-b+\frac{2}{n}} 
          - C s_0^{\frac{2}{n}} (\psi_U(t) + \psi_V(t))
  \end{align*} 
for all $t \in (0,\min\{1,T_{\rm max}\})$. 
\end{lem}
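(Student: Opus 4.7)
The plan is to reduce both weighted integrals to ones involving only $U$ (respectively $V$) by means of a uniform pointwise bound on $W$, and then to control the reduced integrals via the defining identity for $\psi_U$ (respectively $\psi_V$) combined with one application of Young's inequality with a parameter chosen to respect the scaling of $s_0$.

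First, I would establish the uniform pointwise estimate $W(s,t)\le C s^{2/n}$ for $s\in(0,s_0)$ and $t\in(0,\min\{1,T_{\rm max}\})$. Integrating the third equation of \eqref{P1} against $r^{n-1}$ on $[0,r]$ yields $|r^{n-1}w_r|\le c_2$, exactly as in the proof of Lemma \ref{lemprofile}, by combining Lemma \ref{L1esti} with the identity $\gamma\int_\Omega w\,dx=\int_\Omega(\alpha u+\beta v)\,dx$ (which follows from integrating the third equation). A second integration in $r$ gives $w(r,t)\le C+C r^{2-n}$ for $n\ge 3$, and integrating $\rho^{n-1} w(\rho,t)$ from $0$ to $s^{1/n}$ then yields $W(s,t)\le C(s+s^{2/n})\le C s^{2/n}$ uniformly in $t$ (here I absorb the linear term using $s\le R^n$). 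Consequently,
\[
s_0\int_0^{s_0}s^{-b-1}U(s,t)W(s,t)\,ds\le C s_0\int_0^{s_0}s^{-b-1+\frac{2}{n}}U(s,t)\,ds,
\]
and analogously with $V$ in place of $U$.

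Next, integration by parts in the definition of $\psi_U$ (using $UU_s=\tfrac{1}{2}(U^2)_s$ and the fact that the boundary terms vanish because $U(s,t)=O(s)$ as $s\to 0$) gives
\[
\psi_U(t)=\frac{b}{2}\int_0^{s_0}s^{-b-1}(s_0-s)U^2\,ds+\frac{1}{2}\int_0^{s_0}s^{-b}U^2\,ds,
\]
so that after writing $1=(s_0-s)/s_0+s/s_0$ one obtains $\int_0^{s_0}s^{-b-1}U^2\,ds\le\frac{2(b+1)}{bs_0}\psi_U(t)$. Applying Young's inequality
\[
s^{-b-1+\frac{2}{n}}U\le\frac{1}{2\eta}s^{-b-1}U^2+\frac{\eta}{2}s^{-b-1+\frac{4}{n}}
\]
with the scale-respecting choice $\eta=s_0^{-2/n}$, together with the elementary identity $\int_0^{s_0}s^{-b-1+4/n}\,ds=\frac{s_0^{-b+4/n}}{4/n-b}$ (finite thanks to the hypothesis $b<2-\frac{4}{n}$, which guarantees $b<\frac{4}{n}$ in the binding regime), yields after integration and multiplication by $s_0$
\[
s_0\int_0^{s_0}s^{-b-1+\frac{2}{n}}U\,ds\le C s_0^{\frac{2}{n}}\psi_U(t)+C s_0^{1-b+\frac{2}{n}}.
\]
The same estimate holds with $V$ and $\psi_V$. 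Multiplying each by $(b+1)\gamma\chi_i n/d_3$ and summing produces exactly the claimed lower bound.

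The main technical obstacle is the delicate balance in Young's inequality: the choice $\eta=s_0^{-2/n}$ is forced by the requirement that the coefficient of $\psi_U$ in the final bound be exactly $s_0^{2/n}$, and any other choice either puts a wrong $s_0$-power on the $\psi_U$ term or forces the residual to exceed $s_0^{1-b+2/n}$. The hypothesis $b<2-\frac{4}{n}$ is used precisely to keep the elementary integral $\int_0^{s_0}s^{-b-1+4/n}\,ds$ finite at the origin, while $b<1$ is needed to absorb the boundary contribution in the integration-by-parts identity for $\psi_U$.
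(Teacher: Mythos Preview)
Your argument contains a genuine gap in the Young-inequality step. You assert that the elementary integral $\int_0^{s_0}s^{-b-1+4/n}\,ds$ is finite ``thanks to the hypothesis $b<2-\frac{4}{n}$, which guarantees $b<\frac{4}{n}$ in the binding regime.'' This implication is valid only for $n\in\{3,4\}$. For $n\ge 5$ one has $\min\{1,2-\tfrac{4}{n}\}=1$, so the hypothesis is just $b<1$, while $4/n<1$; the subrange $b\in[\tfrac{4}{n},1)$ is nonempty and fully admissible, yet for such $b$ the integral $\int_0^{s_0}s^{-b-1+4/n}\,ds$ diverges at $s=0$ and your estimate collapses. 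This is not a marginal case: in the paper's application (Lemma~\ref{KSlem4.3} and the proof of Theorem~\ref{thm; blow-up_KS}) one takes $b\in(1-\tfrac{2}{n},1)$, which for $n\ge 7$ forces $b>\tfrac{4}{n}$ throughout.

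The paper does not give its own proof here but quotes \cite[Lemma~4.4]{TQ_2020}, whose argument (paralleling the one-species version in \cite{W-2018}) relies on a \emph{sharper} pointwise relationship between $W$ and $U,V$ than the crude bound $W\le Cs^{2/n}$ you use. Starting from $-d_3 r^{n-1}w_r\le \alpha U(r^n)+\beta V(r^n)$ (which follows from $w\ge 0$) and integrating more carefully, one obtains an estimate of the type $W(s,t)\le C s^{2/n}\bigl(U(s,t)+V(s,t)\bigr)+(\text{lower order})$. The point is that the quadratic contribution $s_0\int_0^{s_0}s^{-b-1+2/n}U(U+V)\,ds$ can be bounded directly, using only $s^{2/n}\le s_0^{2/n}$ and your identity $\int_0^{s_0}s^{-b-1}U^2\,ds\le \tfrac{2(b+1)}{bs_0}\psi_U$, by $C s_0^{2/n}(\psi_U+\psi_V)$ --- with no Young splitting and hence no divergent residual. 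The constant term $C s_0^{1-b+2/n}$ in the statement then arises from the lower-order remainder. Your integration-by-parts computation for $\psi_U$ and the overall scaling strategy are correct; what is missing is this refined control of $W$ in terms of $U+V$.
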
 

We estimate the last terms 
in \eqref{basic_phi_U'_KS} and \eqref{basic_phi_V'_KS}. 

%================================================================%
%                                                Lemma 4.5                                               %
%================================================================%
\begin{lem}\label{KSlem4.5}
Let $s_0 \in (0,R^n)$ and let $b \in (0,1)$ 
satisfy
  \begin{align*}
    (n-1)(\max\{\lambda_1,\lambda_2\}-1)<\frac{b}{2}.
  \end{align*}
Then, for all $L>0$, $M_0>0$ and $\ep>0$ there exist 
$C_1>0$ and $C_2>0$ such that
if $u_0, v_0$ satisfy \eqref{initial}, \eqref{initial1} and 
$\int_\Omega( u_0(x) + v_0(x) )\,dx = M_0$ 
as well as \eqref{initial_profile},
then 
  \begin{align}\label{UsVs1}
     &- a_1 \mu_1 n^{\lambda_1-1} \int^{s_0}_0 s^{-b}(s_0-s) 
        \left( \int^s_0 U_s(\sigma,t)V_s^{\lambda_1-1}(\sigma,t)\,d\sigma \right)\,ds
  \notag \\ 
  &\quad\,
    \ge - C_1 s_0^{-(n-1)(\lambda_1-1)+\frac{3-b}{2}-\ep} \sqrt{\psi_U(t)}
  \end{align}
and 
  \begin{align}\label{UsVs2}
    &- a_2 \mu_2 n^{\lambda_2-1} \int^{s_0}_0 s^{-b}(s_0-s) 
       \left( \int^s_0 V_s(\sigma,t)U_s^{\lambda_2-1}(\sigma,t)\,d\sigma \right)\,ds
  \notag \\ 
  &\quad\,
    \ge - C_2 s_0^{-(n-1)(\lambda_2-1)+\frac{3-b}{2}-\ep} \sqrt{\psi_V(t)}
  \end{align}
for all $t \in (0,\min\{1,T_{\rm max}\})$.
\end{lem}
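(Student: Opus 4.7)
The plan is to mirror, step by step, the argument used in Lemma \ref{KSlem4.3} to control the self-competition contribution $I_4$. The only structural difference between the double integrals appearing in \eqref{UsVs1}--\eqref{UsVs2} and the one already treated in \eqref{fubini} is that the inner factor $U_s^{\kappa_1-1}$ (which was dominated pointwise using the profile of $u$) is replaced by $V_s^{\lambda_1-1}$ in \eqref{UsVs1} and by $U_s^{\lambda_2-1}$ in \eqref{UsVs2}. These new factors will be handled by applying Lemma \ref{lemprofile} to $v$ and to $u$ respectively; the two claims are proved by entirely parallel arguments, so I shall describe only \eqref{UsVs1}.

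First, I would apply Fubini's theorem exactly as in \eqref{fubini}, namely
\begin{align*}
\int^{s_0}_0 s^{-b}(s_0-s) \int^s_0 U_s(\sigma,t) V_s^{\lambda_1-1}(\sigma,t) \,d\sigma\,ds = \int^{s_0}_0 U_s V_s^{\lambda_1-1} \left( \int^{s_0}_\sigma s^{-b}(s_0-s)\,ds \right) d\sigma,
\end{align*}
and bound the inner $s$-integral by $\tfrac{(s_0-\sigma)\, s_0^{1-b}}{1-b}$. This reduces the task to estimating $s_0^{1-b}\int^{s_0}_0 (s_0-\sigma) U_s(\sigma,t) V_s^{\lambda_1-1}(\sigma,t)\,d\sigma$ from above. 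Since $V_s(\sigma,t)=\tfrac{1}{n}v(\sigma^{1/n},t)$, Lemma \ref{lemprofile} supplies, for any sufficiently small $\ep'>0$, a constant $c>0$ such that $V_s^{\lambda_1-1}(\sigma,t) \le c\,\sigma^{-(n-1)(\lambda_1-1)-\ep'}$ for all admissible $\sigma, t$. Plugging this pointwise bound in then reduces the problem to estimating
\begin{align*}
\int^{s_0}_0 (s_0-\sigma)\, \sigma^{-(n-1)(\lambda_1-1)-\ep'}\, U_s(\sigma,t)\,d\sigma
\end{align*}
from above.

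This last integral is precisely of the type treated by \cite[Lemma 4.5]{W-2018}, with $\kappa_1$ replaced by $\lambda_1$; the only analytical requirement of that lemma is that the exponent satisfy $(n-1)(\lambda_1-1)<b/2$, which is guaranteed by the hypothesis $(n-1)(\max\{\lambda_1,\lambda_2\}-1)<b/2$ of the present lemma. Invoking that result yields an upper bound of the form $c''\,s_0^{-(n-1)(\lambda_1-1)+(1+b)/2-\ep''}\sqrt{\psi_U(t)}$, and multiplication by the Fubini factor $s_0^{1-b}$ produces the exponent $-(n-1)(\lambda_1-1)+(3-b)/2-\ep''$, in agreement with \eqref{UsVs1}. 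The only point requiring real attention is to verify that the proof of \cite[Lemma 4.5]{W-2018} carries over verbatim when the pointwise profile employed in its interior comes from $v$ instead of $u$; this is automatic because Lemma \ref{lemprofile} delivers estimates on $u$ and $v$ that share the same form $r^{-n(n-1)-\ep}$. Once \eqref{UsVs1} is in hand, the analogous argument with the roles of $U$ and $V$ interchanged and using Lemma \ref{lemprofile} applied to $u$ yields \eqref{UsVs2}.
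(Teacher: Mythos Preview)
Your proposal is correct and follows essentially the same approach as the paper: apply Fubini as in \eqref{fubini} to reduce to $s_0^{1-b}\int_0^{s_0}(s_0-s)U_sV_s^{\lambda_1-1}\,ds$, invoke Lemma~\ref{lemprofile} with a suitably small auxiliary parameter to bound $V_s^{\lambda_1-1}$ pointwise by a power of $s$, and then finish by the argument in the proof of \cite[Lemma~4.5]{W-2018}. The paper makes the choice of the small parameter explicit (choosing $\eta>0$ with $\frac{\eta}{n}(\lambda_1-1)\le\min\{\ep,1\}$ and $(n-1)(\lambda_1-1)+\frac{\eta}{n}(\lambda_1-1)<\frac{b}{2}$), which is exactly what your phrase ``sufficiently small $\ep'$'' encodes.
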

%------------------------------proof------------------------------%
\begin{proof}
It is sufficient to show \eqref{UsVs1}, 
because we can confirm \eqref{UsVs2} by a similar argument. 
Given $\ep>0$, let us fix $\eta>0$ fulfilling 
  \[
    \frac{\eta}{n}(\lambda_1-1) \le \min\{\ep,1\}
    \quad \mbox{and} \quad
    (n-1)(\lambda_1-1)+\frac{\eta}{n}(\lambda_1-1) < \frac{b}{2}.
  \]
By an argument similar to that in \eqref{fubini}, we infer from Fubini's theorem that 
  \begin{align}\label{UsVslam}
    &- a_1 \mu_1 n^{\lambda_1-1} \int^{s_0}_0 s^{-b}(s_0-s) 
       \left( \int^s_0 U_s(\sigma,t)V_s^{\lambda_1-1}(\sigma,t)\,d\sigma \right)\,ds
    \notag \\
    &\quad\,
    \ge - \frac{a_1 \mu_1 n^{\lambda_1-1}}{1-b} 
             s_0^{1-b} \int^{s_0}_0 (s_0-s) U_s V_s^{\lambda_1-1}\,ds.
  \end{align}
Moreover, from Lemma \ref{lemprofile} there exists $c_1>0$ such that
  \begin{align*}
    V_s^{\lambda_1-1}(s,t) 
    = \left( \frac{1}{n}v(s^\frac{1}{n},t) \right)^{\lambda_1-1} 
    \le c_1 s^{-(n-1)(\lambda_1-1)-\frac{\eta}{n}(\lambda_1-1)}
  \end{align*}
for all $s \in (0,R^n)$ and $t \in (0,\min\{1,T_{\rm max}\})$, 
and thus we derive that 
  \[
    s_0^{1-b} \int^{s_0}_0 (s_0-s) U_s V_s^{\lambda_1-1}\,ds
    \le c_1 s_0^{1-b} \int^{s_0}_0 
                 s^{-(n-1)(\lambda_1-1)-\frac{\eta}{n}(\lambda_1-1)}(s_0-s) U_s \,ds
  \]
for all $t \in (0,\min\{1,T_{\rm max}\})$. 
Combining this inequality with \eqref{UsVslam} and using 
an argument similar to that in the proof of \cite[Lemma 4.5]{W-2018}, 
we can verify that \eqref{UsVs1} holds. 
\end{proof}

We can now pass to the proof of Theorem \ref{thm; blow-up_KS}.

\begin{proof}[Proof of Theorem \ref{thm; blow-up_KS}] 
Let us show that there are 
$b \in \big(1-\frac{2}{n},\min\{1,2-\frac{4}{n}\}\big)$, 
$s_1\in(0,R^n)$, $c_1>0$ and $c_2>0$ such that for any $s_0\in(0,s_1)$, 
  \begin{align}\label{phiUphiV}
    \phi_U'(t) + \phi_V'(t) 
    \ge c_1 s_0^{-(3-b)} (\phi_U(t) + \phi_V(t))^2 - c_2 s_0^{1-b+\frac{2}{n}} 
  \end{align}
for all $t \in (0,\min\{1,T_{\rm max}\})$. 
By virtue of \eqref{KSblowup}, in the case $n=3$ we can make sure that
$(n-1)(\max\{\kappa_1,\lambda_1,\kappa_2,\lambda_2\}-1)<\frac{1}{3}=\frac{2-\frac{4}{n}}{2}$, 
whereas in the case $n\ge4$ we have
$(n-1)(\max\{\kappa_1,\lambda_1,\kappa_2,\lambda_2\}-1)<\frac{1}{2}$. 
Thus we take $b \in \big(1-\frac{2}{n},\min\{1,2-\frac{4}{n}\}\big)$ such that
  \begin{align}\label{bcondi}
    (n-1)(\max\{\kappa_1,\lambda_1,\kappa_2,\lambda_2\}-1)<\frac{b}{2}.
  \end{align}
Also, let us pick $\ep>0$ fulfilling
  \begin{align}\label{epcondi}
    2\ep \le 1-\frac{2}{n}.
  \end{align}
Then we can apply Lemmas \ref{KSlem4.3}--\ref{KSlem4.5} 
to find $c_3>0$ and $c_4>0$ such that
  \begin{align*}
    \phi_U'(t) + \phi_V'(t) 
    &\ge - c_3 s_0^{\frac{3-b}{2}-\frac{2}{n}} (\sqrt{\psi_U(t)} + \sqrt{\psi_V(t)})
  \notag \\ 
  &\quad\,
                          + c_4 (\psi_U(t) + \psi_V(t))
  \notag \\ 
  &\quad\,
                          - c_3 s_0^{1-b+\frac{2}{n}} 
                          - c_3 s_0^{\frac{2}{n}} (\psi_U(t) + \psi_V(t)) 
  \notag \\ 
  &\quad\,
                          - c_3 s_0^{-(n-1)(\kappa_1-1)+\frac{3-b}{2}-\ep} \sqrt{\psi_U(t)}
                          - c_3 s_0^{-(n-1)(\kappa_2-1)+\frac{3-b}{2}-\ep} \sqrt{\psi_V(t)}
  \notag \\ 
  &\quad\,
                         - c_3 s_0^{-(n-1)(\lambda_1-1)+\frac{3-b}{2}-\ep} \sqrt{\psi_U(t)}
                         - c_3 s_0^{-(n-1)(\lambda_2-1)+\frac{3-b}{2}-\ep} \sqrt{\psi_V(t)}
  \end{align*}
for all $t \in (0,\min\{1,T_{\rm max}\})$. 
Moreover, Young's inequality yields that  
there exists $c_5>0$ such that
  \begin{align*}
    \phi_U'(t) + \phi_V'(t) 
    &\ge \frac{3}{4} c_4 (\psi_U(t) + \psi_V(t)) 
            - c_3 s_0^{\frac{2}{n}} (\psi_U(t) + \psi_V(t))
  \notag \\
  &\quad\, 
            - c_5 s_0^{1-b+\frac{2}{n}}
               \left( s_0^{2-\frac{6}{n}} 
                       + 1 
                       + s_0^{2-\frac{2}{n}-2(n-1)(\kappa_1-1)-2\ep}
                       + s_0^{2-\frac{2}{n}-2(n-1)(\kappa_2-1)-2\ep}
               \right.
  \notag \\
  &\quad\,\quad\,\quad\,\quad\,\quad\,\quad\,\quad\,
               \left.
                       + s_0^{2-\frac{2}{n}-2(n-1)(\lambda_1-1)-2\ep}
                       + s_0^{2-\frac{2}{n}-2(n-1)(\lambda_2-1)-2\ep}\right)
  \end{align*}
for all $t \in (0,\min\{1,T_{\rm max}\})$. 
Here, we choose $s_1 \in (0,R^n)$ such that
$c_3s_1^{\frac{2}{n}} \le \frac{c_4}{4}$, 
and then we see that $c_3s_0^{\frac{2}{n}} \le \frac{c_4}{4}$ for all $s_0 \in (0,s_1)$. 
Also, since it follows from \eqref{bcondi}, \eqref{epcondi} and 
the condition $b<2-\frac{4}{n}$ that
  \begin{align*}
    &2-\frac{2}{n}-2(n-1)(\max\{\kappa_1,\lambda_1,\kappa_2,\lambda_2\}-1)-2\ep 
  \\ 
  &\quad\, 
    \ge 1 - 2(n-1)(\max\{\kappa_1,\lambda_1,\kappa_2,\lambda_2\}-1)
  \\ 
  &\quad\, 
    > 1 - b > 0,             
  \end{align*}
we infer from the relation $s_0 < R^n$ that  
  \begin{align}\label{phiUphiV2}
    \phi_U'(t) + \phi_V'(t) 
    \ge \frac{c_4}{2} (\psi_U(t) + \psi_V(t))
          - c_6 s_0^{1-b+\frac{2}{n}}
  \end{align}
for all $s_0 \in (0,s_1)$ and $t \in (0,\min\{1,T_{\rm max}\})$,
where 
  \begin{align*}
    c_6&:=c_5(R^{2n-6} 
                  + 1 
                  + R^{2n-2-2n(n-1)(\kappa_1-1)-2n\ep}
                  + R^{2n-2-2n(n-1)(\kappa_2-1)-2n\ep}\\
    &\quad\,\quad\,\,\,
                  + R^{2n-2-2n(n-1)(\lambda_1-1)-2n\ep}
                  + R^{2n-2-2n(n-1)(\lambda_2-1)-2n\ep}).
  \end{align*}
Since \cite[Lemma 4.4]{W-2018} provides $c_7>0$ such that
  \[
    \psi_U(t) \ge c_7 s_0^{-(3-b)} \phi_U^2(t)
    \quad \mbox{and} \quad 
    \psi_V(t) \ge c_7 s_0^{-(3-b)} \phi_V^2(t)
  \]
for all $t \in (0,T_{\rm max})$, 
these inequalities together with \eqref{phiUphiV2} lead to \eqref{phiUphiV}. 

Now let us prove \eqref{blowupKS}. 
We take $s_0 \in (0,s_1)$ so small such that
  \[
    \sqrt{\frac{c_2}{c_1}} s_0^{\frac{1}{n}} 
          +\frac{2}{c_1}s_0
    \le
    \frac{\widetilde{M}_0}{2^{3-b} \omega_n}, 
  \]
and then the relation
  \[
    \frac{\widetilde{M}_0}{2^{3-b} \omega_n} s_0^{2-b} 
    \ge \sqrt{\frac{c_2}{c_1}} s_0^{2-b+\frac{1}{n}} 
          +\frac{2}{c_1}s_0^{3-b}
  \]
holds.
Let us put $r_\star:=\left( \frac{s_0}{4} \right)^\frac{1}{n} \in (0,R)$.
Moreover, we suppose that the initial data $u_0,v_0$ satisfy  
\eqref{initial}, \eqref{initial1}, \eqref{initial KS} and \eqref{initial_profile}. 
Then we can make sure that
$\phi_U(0)+\phi_V(0) \ge \frac{\widetilde{M}_0}{2^{3-b} \omega_n} s_0^{2-b}$ 
by \cite[estimate (5.5)]{W-2018}. 
Thus, as in the proof of \cite[Lemma 4.6]{F_2021_optimal} 
(with $d_1(s_0)=c_1s_0^{-(3-b)}$, $d_2(s_0)=c_2s_0^{1-b-\frac{2}{n}}$ 
and $\phi(s_0)=\frac{\widetilde{M}_0}{2^{3-b} \omega_n}s_0^{2-b}$), 
we can establish that $T_{\rm max} \le \frac{1}{2}$, 
and hence \eqref{blowupKS} results from \eqref{criterion}.
\end{proof}

%================================================================%
%================================================================%
%                                                Section 5                                                %
%================================================================%
%================================================================%
\section{Finite-time blow-up in a model of J\"{a}ger--Luckhaus type}\label{sectionJL}

In this section we deal with 
the system \eqref{P} 
with $\lambda_1 = \lambda_2 = 2$ 
in the case that $h$ is of J\"{a}ger--Luckhaus type. 
Namely, we consider the system 
  \begin{align}\label{P2}
    \begin{cases}
      u_t = d_1 \Delta u 
                - \chi_1 \nabla \cdot (u \nabla w) 
                + \mu_1 u (1- u^{\kappa_1-1} - a_1 v),
        &\quad x \in \Omega,\ t>0,\\
      v_t = d_2 \Delta v 
                - \chi_2 \nabla \cdot (v \nabla w) 
                + \mu_2 v (1- a_2 u - v^{\kappa_2-1}),
        &\quad x \in \Omega,\ t>0,\\
      0 = d_3 \Delta w + \alpha u + \beta v 
            - \frac{1}{|\Omega|} \int_\Omega (\alpha u + \beta v)\,dx,
        &\quad x \in \Omega,\ t>0,\\
      \nabla u \cdot \nu = \nabla v \cdot \nu = \nabla w \cdot \nu = 0, 
        &\quad x \in \partial \Omega,\ t>0,\\
      u(x,0) = u_0(x), \ 
      v(x,0) = v_0(x), 
        &\quad x \in \Omega, 
    \end{cases}
  \end{align}
with
  \[
    \int_\Omega w\,dx = 0, 
      \quad t>0,
  \]
where $\Omega = B_R(0) \subset \mathbb{R}^n$ $(n\ge5)$ 
is a ball with some $R>0$, and 
$u_0,v_0$ satisfy \eqref{initial} and 
  \begin{align}\label{initial2}
    u_0, v_0 \ \mbox{are radially symmetric 
                            and nonincreasing with respect to}\ |x|. 
  \end{align}
Let $(u,v,w)$ be the local classical solution of \eqref{P2} 
on $[0,T_{\rm max})$
given in Lemma \ref{localsol}. 
Moreover, we put
  \[
    \overline{M}(t) := \frac{1}{|\Omega|} \int_\Omega (\alpha u + \beta v)\,dx.
  \]
The main theorem in this section reads as follows.

%================================================================%
%                                              Theorem 5.1                                              %
%================================================================%
%\newpage
\begin{thm}\label{thm; blow-up_JL}
Let $\Omega = B_R(0) \subset \mathbb{R}^n$ $(n\ge5)$ 
and
let $d_1, d_2, d_3, \chi_1, \chi_2, a_1, a_2, \alpha, \beta>0$ 
and $\kappa_1, \kappa_2 \in (1,2]$.
Assume that $\mu_1$ and $\mu_2$ satisfy that 
      \begin{align}\label{JLblowup}
        \chi_1 > 
          \begin{cases}
            \frac{a_1 d_3 \mu_1}{\beta} \cdot \frac{n}{n-4}
              &\mbox{if}\ \kappa_1<2,\\
            \max \big\{ \frac{d_3 \mu_1}{\alpha}, \frac{a_1 d_3 \mu_1}{\beta} \big\}
              \cdot \frac{n}{n-4}
              &\mbox{if}\ \kappa_1=2
          \end{cases}
          \quad \mbox{and} \quad 
        \chi_2 > \frac{a_2 d_3 \mu_2}{\alpha}.
      \end{align}
    Then, for all $M_0>0$ and $\widetilde{M}_0 \in (0,M_0)$ 
    there exists $r_\star \in (0,R)$ with the following property\/{\rm :}
    If $u_0, v_0$ satisfy \eqref{initial} and \eqref{initial2} 
    as well as
      \begin{align}\label{initial JL} 
        \int_\Omega u_0(x)\,dx = M_0 
        \quad \mbox{and} \quad 
        \int_{B_{r_\star}(0)} u_0(x)\,dx \ge \widetilde{M}_0, 
      \end{align}
    then the corresponding solution $(u,v,w)$ of \eqref{P2} 
    blows up in finite time 
    in the sense that $T_{\rm max}<\infty$ and
      \begin{align*}
        \lim_{t \nearrow T_{\rm max}} 
        ( \| u(\cdot,t) \|_{L^\infty(\Omega)} + \| v(\cdot,t) \|_{L^\infty(\Omega)} )
        = \infty. 
      \end{align*}
\end{thm}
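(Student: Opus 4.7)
The plan is to parallel the argument of Section~\ref{sectionKS}, using the radial mass accumulation functions $U,V,W$ together with the moment
$$\phi_U(t) := \int_0^{s_0} s^{-b}(s_0-s) U(s,t)\,ds$$
to derive a superlinear ODI and force finite-time blow-up via comparison with a Riccati equation. Two structural features of the J\"ager--Luckhaus formulation are decisive. First, radial integration of the third equation in \eqref{P2} yields
$$s^{1-\tfrac{1}{n}} w_r(s^{\tfrac{1}{n}},t) = -\tfrac{\alpha}{d_3} U(s,t) - \tfrac{\beta}{d_3} V(s,t) + \tfrac{\overline{M}(t)}{n d_3}\,s,$$
so that the $\gamma W$-type contribution which forced $b\in(0,1)$ in Lemma~\ref{KSlem4.3} is replaced by a linear-in-$s$ term controllable by Lemma~\ref{L1esti}; this opens the range $b\in\bigl(1,\,2-\tfrac{4}{n}\bigr)$. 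Second, since \eqref{initial2} translates to $U_{ss}(\cdot,0)\le 0$ and $V_{ss}(\cdot,0)\le 0$, a parabolic comparison principle applied to the equations satisfied by $u_r$ and $v_r$ (together with their coupling through $w_r$) propagates these monotonicity properties in time, yielding $U_{ss},V_{ss}\le 0$ on $(0,R^n)\times(0,T_{\rm max})$. This is the content of the announced Lemma~\ref{increase}, which I intend to establish by direct comparison rather than by the variational argument of \cite[Lemma 2.2]{W-2018_nonlinear}.

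With concavity at hand, I differentiate $\phi_U$ and substitute the evolution equation
$$U_t = d_1 n^2 s^{2-\tfrac{2}{n}} U_{ss} + \tfrac{\alpha\chi_1 n}{d_3} U U_s + \tfrac{\beta\chi_1 n}{d_3} V U_s - \tfrac{\chi_1 \overline{M}(t)}{d_3}\, s U_s + \mu_1 U - \mu_1 n^{\kappa_1-1}\!\!\int_0^s U_s^{\kappa_1}\,d\sigma - a_1 \mu_1 n\!\!\int_0^s U_s V_s\,d\sigma.$$
The diffusive term, treated by two integrations by parts using $U_{ss}\le 0$, produces a controllable contribution despite the $b>1$ singularity at $s=0$. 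The self-chemotactic term yields the superlinear driver $\gtrsim s_0^{-(3-b)}\phi_U^2$ in analogy with \cite[Lemma 4.4]{W-2018}. The cross-chemotactic term $\tfrac{\beta\chi_1 n}{d_3}V U_s\ge 0$ is discarded, and the mass-type term is bounded via Lemma~\ref{L1esti}. For $\kappa_1\in(1,2]$ the logistic integral is controlled by H\"older's inequality and absorbed into the chemotactic gain, the condition $\chi_1>\tfrac{d_3\mu_1}{\alpha}\cdot\tfrac{n}{n-4}$ in the critical case $\kappa_1=2$ guaranteeing that a fixed fraction of that gain survives. The competitive integral $a_1\mu_1 n\int U_s V_s\,d\sigma$ is tamed using the concavity-based bound $V_s(\sigma)\le V(\sigma)/\sigma$ together with the uniform control of $V$ afforded by the hypothesis $\chi_2>\tfrac{a_2 d_3\mu_2}{\alpha}$ (applied to a companion functional for $V$, where this condition lets the cross-chemotactic term dominate the competitive term on $V$), and finally the largeness $\chi_1>\tfrac{a_1 d_3\mu_1}{\beta}\cdot\tfrac{n}{n-4}$. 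Combining these estimates produces
$$\phi_U'(t)\ge c_1 s_0^{-(3-b)}\phi_U(t)^2 - c_2$$
on $(0,\min\{1,T_{\rm max}\})$ for sufficiently small $s_0\in(0,R^n)$. Estimating $\phi_U(0)\gtrsim s_0^{2-b}$ under \eqref{initial JL} (by an argument parallel to \cite[(5.5)]{W-2018}) and comparing with the resulting Riccati-type ODE forces $T_{\rm max}<\infty$, and the blow-up conclusion follows from \eqref{criterion}.

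The main obstacle is the absorption of the logistic and competitive integrals. The thresholds $\tfrac{n}{n-4}$ on $\chi_1$ play the role of Fuest's critical value in \cite{F_2021_optimal}, encoding the scaling balance among the weight $s^{-b}$, the operator $s^{2-2/n}\partial_s^2$, and the quadratic chemotactic gain in the regime $b\in(1,2-\tfrac{4}{n})$; any weakening of them breaks the absorption. Moreover, the treatment of the competitive term genuinely couples the two species: without concavity of $V$ (hence Lemma~\ref{increase}) and without the largeness of $\chi_2$, the pointwise bound $V_s\le V/\sigma$ together with the $L^1$ mass bound is not enough to subsume the cross integral. It is this intertwining that makes the two-species case go beyond the one-species analysis of \cite{F_2021_optimal} and that I expect to require the most care.
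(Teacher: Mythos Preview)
Your overall architecture---concavity via a comparison principle, the moment functional $\phi_U$ with $b\in(1,2-\tfrac{4}{n})$, and a Riccati-type closure---matches the paper. But there is a genuine gap in how you handle the competitive integral, and it stems from one sentence: ``The cross-chemotactic term $\tfrac{\beta\chi_1 n}{d_3}V U_s\ge 0$ is discarded.'' This term must \emph{not} be discarded; it is precisely what absorbs the competitive integral. After Fubini and the concavity bound $V_s(\sigma)\le V(\sigma)/\sigma$, the competitive contribution becomes
\[
-\,a_1\mu_1 n\int_0^{s_0}\!s^{-b}(s_0-s)\Bigl(\int_0^s U_s V_s\,d\sigma\Bigr)ds
\;\ge\;
-\,\frac{a_1\mu_1 n}{b-1}\int_0^{s_0}\! s^{-b}(s_0-s)\,U_s(s,t)\,V(s,t)\,ds,
\]
which has exactly the same form as the retained cross term, with opposite sign. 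The hypothesis $\chi_1>\tfrac{a_1 d_3\mu_1}{\beta}\cdot\tfrac{n}{n-4}$ is what allows you to pick $b\in\bigl(1+\tfrac{a_1 d_3\mu_1}{\beta\chi_1},\,2-\tfrac{4}{n}\bigr)$ so that $\tfrac{\beta\chi_1 n}{d_3}-\tfrac{a_1\mu_1 n}{b-1}>0$ and the sum is nonnegative. If you throw the cross term away, the condition involving $\beta$ has nothing to act on, and no $L^1$ or ``companion functional'' control of $V$ will rescue the estimate: the weighted integral $\int_0^{s_0} s^{-b}(s_0-s)U_s V\,ds$ with $b>1$ cannot be bounded by mass alone.

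Relatedly, you misread the role of the hypothesis $\chi_2>\tfrac{a_2 d_3\mu_2}{\alpha}$. It is \emph{not} used to produce uniform control of $V$ through a companion functional $\phi_V$; the paper never touches $\phi_V$ in this section. Its sole purpose is structural: in the comparison argument for Lemma~\ref{increase}, the equation for $v_r$ contains the coupling term $\bigl(\tfrac{\alpha\chi_2}{d_3}-a_2\mu_2\bigr)v\,u_r$, and one needs this coefficient to be nonnegative so that $(u_r)_+$ and $(v_r)_+$ satisfy a closed differential inequality system with the right sign. In other words, $\chi_2>\tfrac{a_2 d_3\mu_2}{\alpha}$ is exactly what makes the concavity of $V$ propagate, which in turn licenses the bound $V_s\le V/s$ you correctly invoke. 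Once you keep the cross-chemotactic term and use it to absorb the competitive one, the rest of your outline goes through as in the paper.
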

%%%%%%%%%%%%%%%%%%%%%%%%%%%%%%%%%%%%%%%%%%%%%%%%%%%%%%%%%%%%%%%%%%
%%%%%%%%%%%%%%%%%%%%%%%%%%%%%%%%%%%%%%%%%%%%%%%%%%%%%%%%%%%%%%%%%%

\begin{remark}
If $v=0$, $d_1=d_3=1$, $\chi_1=1$, $\kappa_1=2$ and $\alpha=1$ in \eqref{P2}, 
then the condition \eqref{JLblowup} is the same as in \cite[(1.4b)]{F_2021_optimal}.
Thus this theorem is a generalization of \cite[Theorem 1.1]{F_2021_optimal} 
in the higher-dimensional case.
\end{remark}

\begin{remark}
If $\mu_1$ and $\mu_2$ fulfill that 
  \begin{align*}
       \chi_1 > \frac{a_1 d_3 \mu_1}{\beta}
          \quad \mbox{and} \quad 
        \chi_2> 
          \begin{cases}
            \frac{a_2 d_3 \mu_2}{\alpha} \cdot \frac{n}{n-4}
              &\mbox{if}\ \kappa_2<2,\\
            \max \big\{ \frac{d_3 \mu_2}{\beta}, \frac{a_2 d_3 \mu_2}{\alpha} \big\}
              \cdot \frac{n}{n-4}
              &\mbox{if}\ \kappa_2=2, 
          \end{cases}
  \end{align*}
then, by replacing $u_0$ with $v_0$ in \eqref{initial JL}, 
a similar result can be derived.
\end{remark}
Now, let us define the functions $U,V$ and $W$ as in Section \ref{sectionKS}. 
Furthermore, we introduce the functions $\phi_U, \phi_V, \psi_U$ and $\psi_V$ 
defined in Section \ref{sectionKS} with $b \in (1,2)$.

\begin{remark}
As to the functions $\phi_U$ and $\phi_V$, even though 
we have taken $b \in (1,2)$ unlike e.g.\ 
\cite{B-F-L, W-2018_nonlinear, W-2018} and Section \ref{sectionKS}, 
we see that 
these functions are well-defined and that
$\phi_U$ and $\phi_V$ belong to 
$C^0([0,T_{\rm max})) \cap C^1((0,T_{\rm max}))$, 
since we can confirm from the following lemma that 
$s^{-b}(s_0-s)U(s,t) \le C s^{1-b}(s_0-s)$ for all $t \in [0,T_{\rm max})$ 
with some $C>0$
and that for any $t_0 \in (0,T_{\rm max})$ and $t_1 \in (t_0,T_{\rm max})$, 
$|\frac{d}{dt} s^{-b}(s_0-s)U(s,t)| \le \widetilde{C} s^{1-b}(s_0-s)$
for all $t \in (t_0,t_1)$ with some $\widetilde{C}>0$ 
(for details see the proof of  \cite[Lemma 4.1]{F_2021_optimal}).
\end{remark}

We first show concavity of $U$ and $V$, 
which is obtained by the comparison principle.

%================================================================%
%                                                Lemma 5.2                                               %
%================================================================%
\begin{lem}\label{increase}
Assume that \eqref{initial} and \eqref{initial2} hold. 
If $\mu_1$ and $\mu_2$ satisfy that 
  \begin{align}\label{mu1mu2}
    0 < \mu_1 < \frac{\beta \chi_1}{a_1 d_3}
    \quad \mbox{and} \quad
    0 < \mu_2 < \frac{\alpha \chi_2}{a_2 d_3},
  \end{align}
then 
  \begin{align}\label{UssVss} 
    U_{ss}(s,t) \le 0 
      \quad \mbox{and} \quad 
    V_{ss}(s,t) \le 0 
  \end{align}
and moreover,
  \begin{align}\label{mean}
    U_s(s,t) \le \frac{U(s,t)}{s} \le U_s(0,t)
      \quad \mbox{and} \quad 
    V_s(s,t) \le \frac{V(s,t)}{s} \le V_s(0,t)
  \end{align}
for all $s \in (0,R^n)$ and $t \in [0,T_{\rm max})$. 
\end{lem}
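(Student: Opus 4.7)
My plan is to deduce \eqref{UssVss} from the preservation in time of radial monotonicity of $u$ and $v$, which via the identity $U_{ss}(s,t)=\tfrac{1}{n^2}s^{1/n-1}u_r(s^{1/n},t)$ and its analogue for $V$ amounts to showing that $u_r(\cdot,t)\le 0$ and $v_r(\cdot,t)\le 0$ on $(0,R)$ for all $t\in[0,T_{\rm max})$. Once \eqref{UssVss} is established, \eqref{mean} follows from $U(0,t)=V(0,t)=0$ and the elementary observation that for a concave function on $[0,R^n]$ vanishing at the origin the chord slope $s\mapsto U(s,t)/s$ is nonincreasing, so that $U_s(s,t)\le U(s,t)/s\le \lim_{\sigma\to 0^+}U(\sigma,t)/\sigma=U_s(0,t)$; the same reasoning handles $V$.

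For the propagation of monotonicity I would first rewrite the first radial equation of \eqref{P2} by eliminating $\Delta w$ via the third equation, producing
\[
  u_t = d_1\Bigl(u_{rr}+\tfrac{n-1}{r}u_r\Bigr) - \chi_1 u_r w_r + \tfrac{\chi_1}{d_3}u\bigl(\alpha u+\beta v-\overline{M}(t)\bigr) + \mu_1 u\bigl(1-u^{\kappa_1-1}-a_1 v\bigr),
\]
and proceed analogously for $v$. Differentiating with respect to $r$ and using $\partial_r(d_3\Delta w)+\alpha u_r+\beta v_r=0$ to cancel the arising $w_{rrr}$-contribution, direct computation shows that $A:=-u_r$ and $B:=-v_r$ satisfy a coupled linear parabolic system of the shape
\[
  A_t = d_1\Bigl(A_{rr}+\tfrac{n-1}{r}A_r-\tfrac{n-1}{r^2}A\Bigr) - \chi_1 A_r w_r + \chi_1\tfrac{n-1}{r}A w_r + c_1(r,t)A + \Bigl(\tfrac{\beta\chi_1}{d_3}-a_1\mu_1\Bigr)u(r,t)B,
\]
together with its counterpart for $B$ whose cross-coupling coefficient in front of $A$ is $(\tfrac{\alpha\chi_2}{d_3}-a_2\mu_2)v(r,t)$; here $c_1,c_2$ are locally bounded on $(0,R)\times[0,T_{\rm max})$. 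Hypothesis \eqref{mu1mu2} forces both cross factors $\tfrac{\beta\chi_1}{d_3}-a_1\mu_1$ and $\tfrac{\alpha\chi_2}{d_3}-a_2\mu_2$ to be strictly positive, so together with $u,v\ge 0$ the system is cooperative. Combined with the boundary data $A(0,t)=A(R,t)=0=B(0,t)=B(R,t)$ (from radial regularity at the origin and the Neumann condition at $r=R$) and the initial nonnegativity $A(\cdot,0),B(\cdot,0)\ge 0$ supplied by \eqref{initial2}, a comparison principle for cooperative parabolic systems, applied after the substitution $(A,B)\mapsto e^{-Kt}(A,B)$ with $K$ large enough to absorb the bounded diagonal zeroth-order contributions, yields $A,B\ge 0$ on $[0,R]\times[0,T_{\rm max})$, which is precisely \eqref{UssVss}.

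The main technical obstacle is the singular coefficient $-\tfrac{d_1(n-1)}{r^2}A$ generated by differentiating the radial Laplacian once, which precludes an immediate application of the maximum principle on the full ball. I would circumvent it by running the comparison argument on shrinking annuli $(\varepsilon,R)$ and passing $\varepsilon\searrow 0$, exploiting the boundary vanishing $A(0,t)=B(0,t)=0$ inherited from radial smoothness of $u(\cdot,t)$ and $v(\cdot,t)$; equivalently, one may transcribe the whole analysis in the $s$-variable for $(U_{ss},V_{ss})$, where the principal coefficient $d_1 n^2 s^{2-2/n}$ degenerates only mildly at $s=0$ and is compatible with a weighted comparison argument.
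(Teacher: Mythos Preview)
Your overall strategy coincides with the paper's at the structural level: both reduce \eqref{UssVss} to the propagation of $u_r\le 0$, $v_r\le 0$, derive a coupled linear parabolic system for the radial derivatives, and use \eqref{mu1mu2} to make the off-diagonal coupling coefficients $(\tfrac{\beta\chi_1}{d_3}-a_1\mu_1)u$ and $(\tfrac{\alpha\chi_2}{d_3}-a_2\mu_2)v$ nonnegative. The technical execution differs: the paper tests the equation for $u_r$ against $r(u_r)_+$, obtaining
\[
  \tfrac12\,\tfrac{d}{dt}\!\int_0^R r\bigl((u_r)_+^2+(v_r)_+^2\bigr)\,dr
  \le C(T)\!\int_0^R r\bigl((u_r)_+^2+(v_r)_+^2\bigr)\,dr
\]
on each $[0,T]\subset[0,T_{\rm max})$, and concludes via Gr\"onwall from $(u_{0r})_+=(v_{0r})_+=0$. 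The weight $r$ and an integration by parts render the diffusive contribution nonpositive, so the singular zeroth-order piece never has to be bounded separately; this is what your annulus limit is trying to replace.

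Two remarks on your version. First, your displayed equation for $A$ contains a spurious term $\chi_1\tfrac{n-1}{r}Aw_r$; starting from the form you wrote for $u_t$ (with $\Delta w$ already eliminated) the $r$-derivative of $-\chi_1 u_r w_r$ only produces $-\chi_1 u_{rr}w_r-\chi_1 u_r w_{rr}$, matching the paper's coefficients $A_1,B_1$. This is harmless for the argument. Second, and more substantively, the annulus workaround as stated does not close: on $(\varepsilon,R)$ you have no a~priori sign information at $r=\varepsilon$, so the cooperative comparison principle cannot be applied there, and letting $\varepsilon\searrow 0$ using only $A(0,t)=0$ does not recover nonnegativity inside. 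In fact the workaround is unnecessary: since $A(0,t)=B(0,t)=0$, any negative joint minimum of $e^{-Kt}(A,B)$ on $[0,R]\times[0,T]$ lies at an interior point $r_0\in(0,R)$, where the term $-d_1\tfrac{n-1}{r_0^2}A$ has the favorable sign and all coefficients are finite; choosing $K$ larger than the (locally bounded) diagonal zeroth-order coefficients plus the off-diagonal ones then yields the usual contradiction. Either route requires the regularity $u_r,v_r\in C^0([0,R]\times[0,T_{\rm max}))$, which the paper secures by an approximation of $u_0,v_0$ in $C^2(\overline{\Omega})$ with Neumann data; you should invoke the same step.
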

%------------------------------proof------------------------------%
\begin{proof} 
Since \eqref{mean} is proved  
by using the mean value theorem 
and \eqref{UssVss}, we need only to show \eqref{UssVss}.
To see this, we establish that $u_r \le 0$ and $v_r \le 0$ 
for all $r \in (0,R)$ and $t \in (0,T_{\rm max})$ 
because we have $U_{ss}(s,t) = \frac{1}{n^2} s^{\frac{1}{n}-1} u_r(s^\frac{1}{n},t)$ 
and $V_{ss}(s,t) = \frac{1}{n^2} s^{\frac{1}{n}-1} v_r(s^\frac{1}{n},t)$
for all $s \in (0,R^n)$ and $t \in (0,T_{\rm max})$. 
By an approximation argument 
as in Step 2 in the proof of \cite[Lemma 2.2]{W-2018_nonlinear},  
we can assume without loss of generality that 
$u_0, v_0 \in C^2(\overline{\Omega})$ with 
$\nabla u_0 \cdot \nu = \nabla v_0 \cdot \nu = 0$. 
Then this assumption guarantees that
  \[
    u_r, v_r \in C^0([0,R] \times [0,T_{\rm max})) 
                   \cap 
                   C^{2,1}((0,R) \times (0,T_{\rm max})).
  \]
From the third equation in \eqref{P2} it follows that
  \begin{align*}
    \chi_1 r^{1-n}(r^{n-1} u w_r)_r
    &= \chi_1 u_r w_r + \chi_1 u \cdot r^{1-n}(r^{n-1} w_r)_r 
    \\
    &= \chi_1 u_r w_r 
         + \chi_1 u \cdot \left( - \frac{\alpha}{d_3} u 
                                         - \frac{\beta}{d_3} v 
                                         + \frac{1}{d_3} \overline{M}(t) 
                                 \right)
    \\
    &= \chi_1 u_r w_r
         - \frac{\alpha \chi_1}{d_3} u^2 
         - \frac{\beta \chi_1}{d_3} uv 
         + \frac{\chi_1}{d_3} \overline{M}(t) u, 
  \end{align*}
and so we obtain
  \begin{align*}
    u_{rt} &= (d_1 r^{1-n}(r^{n-1} u_r)_r 
                 - \chi_1 r^{1-n}(r^{n-1} u w_r)_r
                 + \mu_1 u (1 - u^{\kappa_1-1} - a_1 v))_r
    \notag \\ 
           &= d_1 u_{rrr} + d_1 \frac{n-1}{r} u_{rr} - d_1 \frac{n-1}{r^2} u_{r}
    \notag \\
    &\quad\,
                - \chi_1 u_{rr} w_r - \chi_1 u_{r} w_{rr}
                + \frac{2 \alpha \chi_1}{d_3} u u_r 
                + \frac{\beta \chi_1}{d_3} u_r v + \frac{\beta \chi_1}{d_3} u v_r 
                - \frac{\chi_1}{d_3} \overline{M}(t) u_r
    \notag \\
    &\quad\,
                + \mu_1 u_r
                - \mu_1 \kappa_1 u^{\kappa_1-1} u_r
                - a_1 \mu_1 u_r v
                - a_1 \mu_1 u v_r
  \end{align*}
for all $r \in (0,R)$ and $t \in (0,T_{\rm max})$. 
Moreover, this identity is rewritten as
  \begin{align}\label{urt}
    u_{rt} = d_1 u_{rrr} + A_1(r,t) u_{rr} + B_1(r,t) u_r + C_1(r,t) v_r,
  \end{align}
where
  \begin{align*}
    A_1(r,t) &:= d_1 \frac{n-1}{r} - \chi_1 w_r(r,t), 
    \\
    B_1(r,t) &:= - d_1 \frac{n-1}{r^2} 
                    - \chi_1 w_{rr}(r,t) 
                    + \frac{2 \alpha \chi_1}{d_3} u(r,t) 
                    + \frac{\beta \chi_1}{d_3} v(r,t)
                    - \frac{\chi_1}{d_3} \overline{M}(t)
     \\ &\quad\,\ 
                    + \mu_1
                    - \mu_1 \kappa_1 u^{\kappa_1-1}(r,t) 
                    - a_1 \mu_1 v(r,t) 
\intertext{and}
    C_1(r,t) &:= \frac{\beta \chi_1}{d_3} u(r,t)
                    - a_1 \mu_1 u(r,t) 
  \end{align*}
for $r \in (0,R)$ and $t \in (0,T_{\rm max})$.
Now let us fix $T \in (0,T_{\rm max})$
and denote by $(u_r)_+$ and $(v_r)_+$ 
the positive part functions of $u_r$ and $v_r$, respectively. 
Then we have 
  \begin{align}\label{0R}
    (u_r)_+(0,t) = (u_r)_+(R,t) = 0, 
  \end{align}
because the regularity and radial symmetry of $u$ imply that $(u_r)_+(0,t)=0$ 
and the Neumann boundary condition in \eqref{P2} 
gives $(u_r)_+(R,t)=0$. 
In view of \eqref{urt} 
we can verify from integration by parts and \eqref{0R} that 
  \begin{align}\label{I_1 I_2 I_3}
    &\frac{1}{2} \cdot \frac{d}{dt} \int^R_0 r (u_r)_+^2 \,dr
  \notag \\ 
  &\quad\,
    = \int^R_0 r (u_r)_+ [ d_1 u_{rrr} + A_1(r,t) u_{rr} + B_1(r,t) u_r + C_1(r,t) v_r ]\,dr
  \notag \\ 
  &\quad\,    
    = - d_1 \int^R_0 \left( \frac{1}{2}[(u_r)_+^2]_r + r ( [ (u_r)_+ ]_r )^2 \right)\,dr 
       + \frac{1}{2} \int^R_0 r A_1(r,t) [ (u_r)_+^2 ]_r\,dr 
    \notag \\
    &\quad\,\quad\,
       + \int^R_0 r B_1(r,t) (u_r)_+^2\,dr 
       + \int^R_0 r C_1(r,t) (u_r)_+ v_r \,dr
  \notag \\ 
  &\quad\,
    \le \frac{1}{2} \int^R_0 r A_1(r,t) [ (u_r)_+^2 ]_r\,dr 
       + \int^R_0 r B_1(r,t) (u_r)_+^2\,dr 
       + \int^R_0 r C_1(r,t) (u_r)_+ v_r \,dr
  \notag \\ 
  &\quad\,
    =: J_1 + J_2 + J_3
  \end{align}
for all $t \in (0,T)$. 
Recalling the definition of $A_1$,
we obtain from \eqref{0R} 
and integration by parts that 
  \begin{align*}
   J_1 &= \frac{d_1 (n-1)}{2} \int^R_0 [ (u_r)_+^2 ]_r\,dr
         - \chi_1 \int^R_0 r w_r [ (u_r)_+^2 ]_r\,dr
    \notag \\
    &= \frac{d_1 (n-1)}{2} \left[ (u_r)_+^2(R,t) - (u_r)_+^2(0,t) \right]
    \notag \\
    &\quad\,
         - \chi_1 R w_r(R,t) \cdot (u_r)_+^2(R,t)
         + \chi_1 \int^R_0 w_r (u_r)_+^2\,dr
         + \chi_1 \int^R_0 r w_{rr} (u_r)_+^2\,dr 
    \notag \\
    &\le \chi_1 \int^R_0 w_r (u_r)_+^2\,dr
         + \chi_1 \| w_{rr} \|_{L^\infty((0,R) \times (0,T))} \int^R_0 r (u_r)_+^2\,dr. 
  \end{align*}
Here the third equation in \eqref{P2} yields
  \begin{align}\label{wr}
    w_r = - \frac{1}{d_3} r^{1-n} \int^r_0 \rho^{n-1} (\alpha u + \beta v)\,d\rho 
            + \frac{1}{d_3 n} \overline{M}(t) r 
         \le \frac{1}{d_3 n} \overline{M}(t) r, 
  \end{align}
which together with Lemma \ref{L1esti} means that 
  \begin{align}\label{I_1esti}
   J_1 &\le \frac{\chi_1}{d_3 n} \overline{M}(t) \int^R_0 r (u_r)_+^2\,dr
           + \chi_1 \| w_{rr} \|_{L^\infty((0,R) \times (0,T))} \int^R_0 r (u_r)_+^2\,dr
    \notag \\ 
        &\le   c_1 \int^R_0 r (u_r)_+^2\,dr
  \end{align}
for all $t \in (0,T)$, 
where 
  \[
    c_1 := \frac{\chi_1}{d_3 n |\Omega|}\left(\alpha e^{\mu_1 T} \int_\Omega u_0\,dx 
             + \beta e^{\mu_2 T} \int_\Omega v_0\,dx \right)
             + \chi_1 \| w_{rr} \|_{L^\infty((0,R) \times (0,T))}.
  \]
We next estimate $J_2$.
Noticing from the third equation in \eqref{P2} 
and \eqref{wr} that
  \begin{align*}
    - w_{rr} &= \frac{n-1}{r} w_r 
                    + \frac{\alpha}{d_3} u 
                    + \frac{\beta}{d_3} v
                    - \frac{1}{d_3} \overline{M}(t)
    \\
               &\le - \frac{1}{d_3n} \overline{M}(t) 
                      + \frac{\alpha}{d_3} u 
                      + \frac{\beta}{d_3} v
    \\
               &\le \frac{\alpha}{d_3} u 
                      + \frac{\beta}{d_3} v
  \end{align*}
and using the nonpositivity of
$- d_1 \frac{n-1}{r^2}$, 
$- \frac{\chi_1}{d_3} \overline{M}(t)$,
$- \mu_1 \kappa_1 u^{\kappa_1-1}$ and  
$- a_1 \mu_1 v$, 
we infer that 
  \begin{align*}
    B_1(r,t) &\le \frac{3 \alpha \chi_1}{d_3} u + \frac{2 \beta \chi_1}{d_3} v + \mu_1
  \\
               &\le \frac{3 \alpha \chi_1}{d_3} \| u \|_{L^\infty((0,R) \times (0,T))} 
                      + \frac{2 \beta \chi_1}{d_3} \| v \|_{L^\infty((0,R) \times (0,T))} 
                      + \mu_1
               =: c_2,
  \end{align*}
and hence 
  \begin{align}\label{I_2esti}
    J_2 \le c_2 \int^R_0 r (u_r)_+^2\,dr 
  \end{align}
for all $t\in(0,T)$. 
With regard to $J_3$, 
the first condition in \eqref{mu1mu2} ensures that 
$\frac{\beta \chi_1}{d_3} - a_1 \mu_1>0$, and thereby we have
  \[
    C_1(r,t) \le \left( \frac{\beta \chi_1}{d_3} - a_1 \mu_1 \right) \| u \|_{L^\infty((0,R) \times (0,T))}.
  \]
Thanks to this inequality, $J_3$ is estimated as
  \begin{align}\label{I_3esti}
    J_3 &\le \left( \frac{\beta \chi_1}{d_3} - a_1 \mu_1 \right) 
               \| u \|_{L^\infty((0,R) \times (0,T))} 
               \int^R_0 r (u_r)_+ (v_r)_+\,dr 
         \notag \\
         &\le c_3 \int^R_0 r (u_r)_+^2\,dr
                + c_3 \int^R_0 r (v_r)_+^2\,dr
  \end{align}
for all $t\in(0,T)$, where 
$c_3:=\frac{1}{2} \big( \frac{\beta \chi_1}{d_3} - a_1 \mu_1 \big) 
\| u \|_{L^\infty((0,R) \times (0,T))}$. 
Therefore a combination of 
\eqref{I_1esti}--\eqref{I_3esti} with \eqref{I_1 I_2 I_3} 
implies that 
  \begin{align*}
    \frac{1}{2} \cdot \frac{d}{dt} \int^R_0 r (u_r)_+^2 \,dr 
    \le c_4 \int^R_0 r (u_r)_+^2\,dr + c_3 \int^R_0 r (v_r)_+^2\,dr
  \end{align*}
for all $t \in (0,T)$, where $c_4 := c_1 + c_2 + c_3$. 
Similarly, under the second condition in \eqref{mu1mu2}, 
we can observe from the second equation in \eqref{P2} that 
  \[
    \frac{1}{2} \cdot \frac{d}{dt} \int^R_0 r (v_r)_+^2 \,dr 
    \le c_5 \int^R_0 r (u_r)_+^2\,dr + c_6 \int^R_0 r (v_r)_+^2\,dr
  \]
with some $c_5=c_5(T)>0$ and $c_6=c_6(T)>0$, 
and thus find $c_7=c_7(T)>0$ such that 
  \[
    \frac{1}{2} \cdot \frac{d}{dt} \int^R_0 r ((u_r)_+^2 + (v_r)_+^2)\,dr 
    \le c_7 \int^R_0 r ((u_r)_+^2 + (v_r)_+^2)\,dr
  \]
for all $t \in (0,T)$. 
Since $u_{0r} \le 0$ and $v_{0r} \le 0$ for all $r \in [0,R]$ by \eqref{initial2}, 
the above inequality provides that
$u_r \le 0$ and $v_r \le 0$ for all $r \in (0,R)$ and $t \in (0,T)$. 
Letting $T \nearrow T_{\rm max}$, we arrive at the conclusion.
\end{proof}
%}%

We next establish a differential inequality for $\phi_U$. 
In order to derive it, we will use estimates 
as in \cite[Lemmas 4.3 and 4.4]{F_2021_optimal}. 

%================================================================%
%                                                Lemma 5.3                                               %
%================================================================%
\begin{lem}\label{lem; phi_U'}
Let $s_0 \in (0,R^n)$ and $b \in (1,2-\frac{4}{n})$. 
Assume that $\mu_1$ and $\mu_2$ satisfy \eqref{mu1mu2}. 
Then there exist $C_1>0$ and $C_2>0$ such that 
  \begin{align}\label{basic_phi_U'2}
    \phi_U'(t) &\ge - C_1 s_0^{\frac{3-b}{2}-\frac{2}{n}} \sqrt{\psi_U(t)}
  \notag \\ 
  &\quad\,
                          + \frac{\alpha \chi_1 n}{d_3} \psi_U(t)
                          + \frac{\beta \chi_1 n}{d_3} \int^{s_0}_0 s^{-b}(s_0-s) U_s(s,t)V(s,t)\,ds
  \notag \\ 
  &\quad\,
                          - C_2 s_0^{\frac{3-b}{2}} \sqrt{\psi_U(t)} 
  \notag \\ 
  &\quad\,
                          - \mu_1 n^{\kappa_1-1}
                               \int^{s_0}_0 s^{-b}(s_0-s) 
                                 \left( \int^s_0 U_s^{\kappa_1}(\sigma,t)\,d\sigma \right)\,ds
  \notag \\ 
  &\quad\,
                          - a_1 \mu_1 n 
                               \int^{s_0}_0 s^{-b}(s_0-s) 
                                 \left( \int^s_0 U_s(\sigma,t)V_s(\sigma,t)\,d\sigma \right)\,ds
  \end{align}
for all $t \in (0,\min\{1,T_{\rm max}\})$.
\end{lem}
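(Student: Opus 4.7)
The plan is to mirror the transformation technique of \cite[Lemma 4.3]{W-2018} in the J\"{a}ger--Luckhaus framework, the two new structural features being the $V$-cross coupling in the $u$-equation and the nonlocal mean $\overline{M}(t)$ appearing in the third equation. I first rewrite the first equation in \eqref{P2} in radial form, then multiply the third equation by $r^{n-1}$ and integrate over $[0,s^{1/n}]$ to obtain
\[
s^{1-\frac{1}{n}} w_r(s^{\frac{1}{n}},t) = -\frac{\alpha}{d_3}U(s,t)-\frac{\beta}{d_3}V(s,t)+\frac{\overline{M}(t)}{d_3 n}\,s.
\]
Inserting this into the radial $u$-equation and using $U_s = u(s^{1/n},t)/n$, $V_s = v(s^{1/n},t)/n$, $U_{ss}=s^{1/n-1}u_r(s^{1/n},t)/n^2$ produces a scalar PDE for $U$ whose seven terms, multiplied by $s^{-b}(s_0-s)$ and integrated over $[0,s_0]$, express $\phi_U'(t)$ as a sum of seven integrals. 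Four of them appear already in the target \eqref{basic_phi_U'2}: the $\tfrac{\alpha\chi_1 n}{d_3}\psi_U$-term, the $\tfrac{\beta\chi_1 n}{d_3}\int s^{-b}(s_0-s) U_s V\,ds$-integral, and the two source integrals in $U_s^{\kappa_1}$ and $U_s V_s$. The nonnegative contribution $\mu_1\phi_U$ is simply discarded, and the diffusion term $d_1 n^2\int s^{2-2/n-b}(s_0-s) U_{ss}\,ds$ is handled by the integration-by-parts plus Cauchy--Schwarz estimate of \cite[Lemma 4.3]{W-2018}, producing the $-C_1 s_0^{(3-b)/2-2/n}\sqrt{\psi_U}$ bound.

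The only genuinely new obstacle is the nonlocal term
\[
T_\star := -\frac{\chi_1}{d_3}\overline{M}(t)\int_0^{s_0} s^{1-b}(s_0-s)\,U_s(s,t)\,ds,
\]
for which I aim for $T_\star \ge -C_2 s_0^{(3-b)/2}\sqrt{\psi_U(t)}$. The first step is integration by parts: Lemma \ref{increase} combined with the local boundedness of $u$ on $[0,t_0]\subset[0,T_{\rm max})$ yields the pointwise bound $U(s,t)\le c(t)\,s$ on $(0,R^n)$, so the boundary contribution $s^{1-b}(s_0-s)U$ behaves like $s^{2-b}\to 0$ as $s\to 0$ for $b\in(1,2-\tfrac{4}{n})\subset(1,2)$, while the endpoint $s=s_0$ vanishes via $(s_0-s)|_{s_0}=0$. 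Hence
\[
\int_0^{s_0} s^{1-b}(s_0-s) U_s\,ds = (b-1)\phi_U(t) + \int_0^{s_0} s^{1-b} U(s,t)\,ds.
\]

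To bound both pieces by $s_0^{(3-b)/2}\sqrt{\psi_U}$ in the regime $b>1$ --- where $s^{-b}(s_0-s)$ is no longer integrable so the naive Cauchy--Schwarz estimate of \cite[Lemma 4.4]{W-2018} breaks down --- the decisive step is the auxiliary identity
\[
\psi_U(t) = \frac{b}{2}\int_0^{s_0} s^{-b-1}(s_0-s)U^2\,ds + \frac{1}{2}\int_0^{s_0} s^{-b}U^2\,ds,
\]
obtained from $\psi_U=\tfrac{1}{2}\int_0^{s_0} s^{-b}(s_0-s)(U^2)_s\,ds$ by integration by parts, with boundary terms again annihilated by $U\le c(t)s$. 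This gives $\int s^{-b-1}(s_0-s)U^2\le \tfrac{2}{b}\psi_U$ and $\int s^{-b}U^2\le 2\psi_U$, and then Cauchy--Schwarz with the weight splittings $s^{-b}(s_0-s)U = \sqrt{s^{-b-1}(s_0-s)}\,U\cdot\sqrt{s^{1-b}(s_0-s)}$ and $s^{1-b}U = \sqrt{s^{-b}}\,U\cdot\sqrt{s^{2-b}}$ yields both $\phi_U\le C s_0^{(3-b)/2}\sqrt{\psi_U}$ and $\int_0^{s_0} s^{1-b}U\,ds\le C s_0^{(3-b)/2}\sqrt{\psi_U}$. Combined with the uniform bound $\overline{M}(t)\le C$ on $[0,\min\{1,T_{\rm max}\})$ from Lemma \ref{L1esti}, these produce the required $T_\star$-estimate, and assembling all seven contributions yields \eqref{basic_phi_U'2}. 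The hardest step is setting up this auxiliary identity for $\psi_U$ together with the boundary-term bookkeeping, since without the concavity delivered by Lemma \ref{increase} the pointwise control $U\le c(t)s$ needed to justify these integrations by parts in the non-integrable weight regime $b>1$ would be unavailable.
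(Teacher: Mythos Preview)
Your overall architecture matches the paper's: derive the $U$-PDE from the radial equations, express $\phi_U'$ as a sum of seven integrals, keep four of them as they are, drop the nonnegative $\mu_1\phi_U$, and estimate the diffusion and mean-value terms separately. The result is correct.

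Two points of comparison are worth noting.

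\textbf{The nonlocal term $T_\star$.} Here you take a longer road than the paper. The paper dispatches $\widetilde I_4$ in one line using the concavity inequality $U_s\le U/s$ from Lemma~\ref{increase}:
\[
\int_0^{s_0} s^{1-b}(s_0-s)\,U_s\,ds \;\le\; \int_0^{s_0} s^{-b}(s_0-s)\,U\,ds \;=\; \phi_U(t),
\]
and then simply invokes the relation $\psi_U\ge c\,s_0^{-(3-b)}\phi_U^2$ (adapted to $b>1$) together with the $L^1$-bound on $\overline{M}$. Your route---integration by parts to produce $(b-1)\phi_U+\int s^{1-b}U$, followed by the auxiliary identity $\psi_U=\tfrac{b}{2}\int s^{-b-1}(s_0-s)U^2+\tfrac12\int s^{-b}U^2$ and two separate Cauchy--Schwarz splittings---is sound, and in fact your auxiliary identity \emph{is} precisely the mechanism behind the $\psi_U$--$\phi_U$ relation the paper cites. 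So you have effectively re-derived that relation from scratch and added an unnecessary extra term $\int s^{1-b}U$ to estimate. The paper's use of $U_s\le U/s$ up front avoids both detours.

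\textbf{The diffusion term.} You attribute the estimate on $\widetilde I_1$ to \cite[Lemma~4.3]{W-2018}, but that lemma is stated for $b<1$. In the present regime $b\in(1,2-\tfrac4n)$ the paper instead appeals to \cite[Lemma~4.3]{F_2021_optimal}, whose proof relies on the concavity bound \eqref{mean} (roughly, $U_s\le\sqrt{UU_s/s}$ before Cauchy--Schwarz). Since you already have Lemma~\ref{increase} in hand this is only a citation slip, not a mathematical gap, but you should be aware that the $b>1$ case does require the concavity input here as well.
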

%------------------------------proof------------------------------%
\begin{proof}
Since we can verify from the third equation in \eqref{P2} that 
  \[
    s^{1-\frac{1}{n}} w_r(s^{\frac{1}{n}},t) 
    = - \frac{\alpha}{d_3} U(s,t)
       - \frac{\beta}{d_3} V(s,t) 
       + \frac{1}{d_3 n} \overline{M}(t) s
  \]
for all $s \in (0,R^n)$ and $t \in (0,T_{\rm max})$, 
we obtain from \eqref{ut} and this identity that
  \begin{align*}
    U_t &= d_1 n^2 s^{2-\frac{2}{n}} U_{ss} 
              + \frac{\alpha \chi_1 n}{d_3} UU_s 
              + \frac{\beta \chi_1 n}{d_3} U_sV 
              - \frac{\chi_1}{d_3} \overline{M}(t) s U_s
  \\ 
  &\quad\,
              + \mu_1 U 
              - \mu_1 n^{\kappa_1-1} \int^s_0 U_s^{\kappa_1}(\sigma,t)\,d\sigma 
              - a_1 \mu_1 n \int^s_0 U_s(\sigma,t)V_s(\sigma,t)\,d\sigma
  \end{align*}
for all $s \in (0,R^n)$ and $t \in (0,T_{\rm max})$. 
In view of this identity we deduce that 
  \begin{align}\label{basic_phi_U'}
    \phi_U'(t) &\ge d_1 n^2 \int^{s_0}_0 s^{2-\frac{2}{n}-b}(s_0-s) U_{ss}\,ds
  \notag \\ 
  &\quad\,
                          + \frac{\alpha \chi_1 n}{d_3} 
                             \psi_U(t)
                          + \frac{\beta \chi_1 n}{d_3} \int^{s_0}_0 s^{-b}(s_0-s) U_sV\,ds
  \notag \\ 
  &\quad\,
                          - \frac{\chi_1}{d_3} \overline{M}(t) \int^{s_0}_0 s^{1-b}(s_0-s) U_s\,ds
  \notag \\ 
  &\quad\,
                          - \mu_1 n^{\kappa_1-1}
                               \int^{s_0}_0 s^{-b}(s_0-s) 
                                 \left( \int^s_0 U_s^{\kappa_1}(\sigma,t)\,d\sigma \right)\,ds
  \notag \\ 
  &\quad\,
                          - a_1 \mu_1 n 
                               \int^{s_0}_0 s^{-b}(s_0-s) 
                                 \left( \int^s_0 U_s(\sigma,t)V_s(\sigma,t)\,d\sigma \right)\,ds
  \notag \\ 
                  &=: \widetilde{I}_1 + \widetilde{I}_2 + \widetilde{I}_3 
                       + \widetilde{I}_4 + \widetilde{I}_5 + \widetilde{I}_6
  \end{align}
for all $t \in (0,T_{\rm max})$. 
Let us next show estimates for $\widetilde{I}_1$ and $\widetilde{I}_4$. 
With regard to $\widetilde{I}_1$, 
owing to \eqref{mu1mu2} we can employ \eqref{mean}, 
and so by an argument similar to that in the proof of 
\cite[Lemma 4.3]{F_2021_optimal} 
we provide $c_1>0$ such that
  \begin{align}\label{1inte}
    \widetilde{I}_1 \ge - c_1 s_0^{\frac{3-b}{2}-\frac{2}{n}} \sqrt{\psi_U(t)}
  \end{align}
for all $t \in (0,T_{\rm max})$.
As to $\widetilde{I}_4$, 
arguing as in \cite[Lemma 4.3]{W-2018}, 
we can take $c_2>0$ such that 
  \begin{align}\label{psiphi}
    \psi_U(t) \ge c_2 s_0^{-(3-b)} \phi_U^2(t)
  \end{align}
for all $t \in (0,T_{\rm max})$, 
so that we rely on \eqref{mean} and 
this inequality to see that 
  \begin{align}\label{step2; ineq}
    \widetilde{I}_4
    \ge - \frac{\chi_1}{d_3} \overline{M}(t) \phi_U(t)  
    \ge - \frac{\chi_1}{\sqrt{c_2} d_3} s_0^\frac{3-b}{2} \overline{M}(t) \sqrt{\psi_U(t)}
  \end{align}
for all $t \in (0,T_{\rm max})$.
Moreover, it follows from Lemma \ref{L1esti} that 
for all $t \in (0, \min\{1,T_{\rm max}\})$, 
  \[
    \overline{M}(t) \le \alpha e^{\mu_1} \int_\Omega u_0\,dx 
                              + \beta e^{\mu_2} \int_\Omega v_0\,dx,
  \]
which in conjunction with \eqref{step2; ineq} yields 
  \begin{align}\label{4inte}
    \widetilde{I}_4 \ge - c_3 s_0^{\frac{3-b}{2}} \sqrt{\psi_U(t)}
  \end{align}
for all $t \in (0, \min\{1,T_{\rm max}\})$, 
where 
$c_3:=\frac{\chi_1}{\sqrt{c_2} d_3}
         \big(\alpha e^{\mu_1} \int_\Omega u_0\,dx 
          + \beta e^{\mu_2} \int_\Omega v_0\,dx\big)$.
Accordingly, a combination of 
\eqref{basic_phi_U'}, \eqref{1inte} and \eqref{4inte} 
entails \eqref{basic_phi_U'2}.
\end{proof}
%----------------------------------------------------------------%

Next, we estimate the fifth term 
on the right-hand side of \eqref{basic_phi_U'2}. 
The following lemma generalizes 
\cite[(i) in Lemma 4.2]{F_2021_optimal}.

%================================================================%
%                                                Lemma 5.4                                               %
%================================================================%
\begin{lem}\label{UUs}
Let $s_0 \in (0,R^n)$ and $b \in (1,2)$.
Assume that $\mu_1$ and $\mu_2$ satisfy \eqref{mu1mu2}.  
Then 
  \begin{align*}
    - \mu_1 n^{\kappa_1-1} 
       \int^{s_0}_0 s^{-b}(s_0-s) 
       \left( \int^s_0 U_s^{\kappa_1}(\sigma,t)\,d\sigma \right)\,ds
    \ge
      \begin{cases} 
        - C s_0^{(3-b)\frac{2-\kappa_1}{2}} 
           \psi_U^{\frac{\kappa_1}{2}}(t)
        &\ \mbox{if} \ \kappa_1 \in (1,2),
      \\[2mm]
        - \dfrac{\mu_1 n}{b-1} 
           \psi_U(t) 
        &\ \mbox{if} \ \kappa_1=2
      \end{cases}
  \end{align*}
  for all $t \in (0,T_{\rm max})$ with some $C>0$.
\end{lem}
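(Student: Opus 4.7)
The plan is to first apply Fubini's theorem to rewrite the double integral, then handle the two cases separately. Setting
\[
A(\sigma) := \int_\sigma^{s_0} s^{-b}(s_0-s)\,ds,
\]
Fubini gives $\int_0^{s_0} s^{-b}(s_0-s)\int_0^s U_\sigma^{\kappa_1}\,d\sigma\,ds = \int_0^{s_0} A(\sigma) U_\sigma^{\kappa_1}\,d\sigma$. Since $b>1$, a direct computation yields $\int_\sigma^{s_0} s^{-b}\,ds = (\sigma^{1-b}-s_0^{1-b})/(b-1) \le \sigma^{1-b}/(b-1)$, so $A(\sigma) \le (s_0-\sigma)\sigma^{1-b}/(b-1)$. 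This reduces both cases to estimating
\[
I(t) := \int_0^{s_0} \sigma^{1-b}(s_0-\sigma) U_\sigma^{\kappa_1}(\sigma,t)\,d\sigma.
\]

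For $\kappa_1 = 2$, I will invoke Lemma \ref{increase} (which applies thanks to \eqref{mu1mu2}) to use the pointwise bound $U_\sigma \le U/\sigma$, giving $\sigma^{1-b}(s_0-\sigma) U_\sigma^2 \le \sigma^{-b}(s_0-\sigma) U U_\sigma$. Integrating yields $I(t) \le \psi_U(t)$, and the lemma follows with constant $\mu_1 n/(b-1)$.

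For $\kappa_1 \in (1,2)$, the core idea is a weighted Hölder inequality with exponents $2/\kappa_1$ and $2/(2-\kappa_1)$. I factor the integrand of $I(t)$ as
\[
\sigma^{1-b}(s_0-\sigma)U_\sigma^{\kappa_1} = \bigl[\sigma^{-b}(s_0-\sigma)UU_\sigma\bigr]^{\kappa_1/2} \cdot G(\sigma,t),
\]
where balancing exponents forces
\[
G(\sigma,t) = \sigma^{1-b(2-\kappa_1)/2}(s_0-\sigma)^{(2-\kappa_1)/2} U^{-\kappa_1/2} U_\sigma^{\kappa_1/2}.
\]
Hölder then yields $I(t) \le \psi_U(t)^{\kappa_1/2}\bigl(\int_0^{s_0} G^{2/(2-\kappa_1)}\,d\sigma\bigr)^{(2-\kappa_1)/2}$. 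Invoking $U_\sigma/U \le 1/\sigma$ from Lemma \ref{increase}, the combination $U^{-\kappa_1/(2-\kappa_1)}U_\sigma^{\kappa_1/(2-\kappa_1)}$ is bounded by $\sigma^{-\kappa_1/(2-\kappa_1)}$, and the exponent arithmetic collapses $G^{2/(2-\kappa_1)}$ to $\sigma^{1-b}(s_0-\sigma)$. Since $b\in(1,2)$ implies $1-b>-1$, this integral is finite and equals $s_0^{3-b}/[(2-b)(3-b)]$, producing the claimed scaling $s_0^{(3-b)(2-\kappa_1)/2}\psi_U^{\kappa_1/2}(t)$.

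The main technical obstacle is pinning down the Hölder decomposition in the subquadratic case: the factor $G$ must both reconstruct the original integrand and have $G^{2/(2-\kappa_1)}$ integrable near $\sigma=0$ despite the singular weight $\sigma^{1-b}$ (recall $b>1$). These two constraints force the concavity bound $U_\sigma \le U/\sigma$ to cancel the negative power of $U$ exactly, producing the integrable weight $\sigma^{1-b}(s_0-\sigma)$. Once the integrability holds, the $s_0$-scaling is immediate and the rest is bookkeeping of constants; the hypothesis \eqref{mu1mu2} enters only through Lemma \ref{increase}.
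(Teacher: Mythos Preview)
Your proof is correct and follows essentially the same approach as the paper's: the Fubini reduction with the bound $A(\sigma)\le(s_0-\sigma)\sigma^{1-b}/(b-1)$ is identical, and in the case $\kappa_1\in(1,2)$ your H\"older splitting combined with the concavity estimate $U_\sigma/U\le\sigma^{-1}$ is the same computation as the paper's, merely with the order of the two steps interchanged (the paper applies concavity first to get $U_\sigma^{\kappa_1}\le\sigma^{-\kappa_1/2}(UU_\sigma)^{\kappa_1/2}$ and then H\"older, whereas you factor first and bound $G^{2/(2-\kappa_1)}$ afterwards). The $\kappa_1=2$ case likewise agrees with the argument the paper cites from \cite[(i) in Lemma 4.2]{F_2021_optimal}.
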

%------------------------------proof------------------------------%
\begin{proof} 
In the case $\kappa_1=2$ this lemma can be obtained 
by the same argument as in the proof of 
\cite[(i) in Lemma 4.2]{F_2021_optimal}. 
Therefore we consider the case $\kappa_1 \in (1,2)$. 
Noting that $b>1$, by Fubini's theorem we have 
  \begin{align}\label{lem4.3; ineq1}
    &- \mu_1 n^{\kappa_1-1} 
         \int^{s_0}_0 s^{-b}(s_0-s) 
         \left( \int^s_0 U_s^{\kappa_1}(\sigma,t)\,d\sigma \right)\,ds
    \notag \\ 
    &\quad\,
    = - \mu_1 n^{\kappa_1-1} 
          \int^{s_0}_0 \left( \int^{s_0}_\sigma s^{-b}(s_0-s)\,ds \right)
          U_s^{\kappa_1}(\sigma,t)\,d\sigma
    \notag \\
    &\quad\,
    \ge - \mu_1 n^{\kappa_1-1}
             \int^{s_0}_0 \left( \int^{s_0}_\sigma s^{-b}\,ds \right) 
             (s_0-\sigma) U_s^{\kappa_1}(\sigma,t)\,d\sigma
    \notag \\
    &\quad\,
    \ge - \frac{\mu_1 n^{\kappa_1-1}}{b-1}
             \int^{s_0}_0 s^{1-b}(s_0-s) U_s^{\kappa_1}(s,t)\,ds
  \end{align}
for all $t\in(0,T_{\rm max})$. 
Since we can apply Lemma \ref{increase} 
by the condition \eqref{mu1mu2}, 
we know that 
$U_s(s,t) \le s^{-1}U(s,t)$
for all $s \in (0,R^n)$ and $t \in (0,T_{\rm max})$, 
and hence it follows that 
  \begin{align}\label{lem4.3; ineq2}
    - \frac{\mu_1 n^{\kappa_1-1}}{b-1}
       \int^{s_0}_0 s^{1-b}(s_0-s) U_s^{\kappa_1}\,ds
    \ge - \frac{\mu_1 n^{\kappa_1-1}}{b-1}
             \int^{s_0}_0 s^{1-\frac{\kappa_1}{2}-b}(s_0-s) ( UU_s )^{\frac{\kappa_1}{2}}\,ds.
  \end{align}
Moreover, by using H\"{o}lder's inequality we can deduce that 
  \begin{align}\label{lem4.3; ineq3}
    &\int^{s_0}_0 s^{1-\frac{\kappa_1}{2}-b}(s_0-s) ( UU_s )^{\frac{\kappa_1}{2}}\,ds
    \notag \\ 
    &\quad\, 
    \le \left( \int^{s_0}_0 s^{1-b}(s_0-s)\,ds \right)^{1-\frac{\kappa_1}{2}}
         \left( \int^{s_0}_0 s^{-b}(s_0-s)UU_s\,ds \right)^\frac{\kappa_1}{2}
    \notag \\
    &\quad\,
    =   c_1 
         s_0^{(3-b)\frac{2-\kappa_1}{2}} 
         \psi_U^{\frac{\kappa_1}{2}}(t)
  \end{align}
for all $t \in (0,T_{\rm max})$, 
where $c_1:=\left( \frac{1}{(2-b)(3-b)} \right)^{(3-b)\frac{2-\kappa_1}{2}}$. 
A combination of \eqref{lem4.3; ineq1}--\eqref{lem4.3; ineq3} 
leads to the conclusion of this lemma.
\end{proof} 

As a final preparation for the proof of Theorem \ref{thm; blow-up_JL}, 
we show that the sixth term on the right-hand side of \eqref{basic_phi_U'2}
is estimated by the third term.

%================================================================%
%                                                Lemma 5.4                                               %
%================================================================%
\begin{lem}\label{UsV}
Let $s_0 \in (0,R^n)$ and $b \in (1,2)$. 
Assume that $\mu_1$ and $\mu_2$ satisfy \eqref{mu1mu2}. 
Then 
  \begin{align}\label{lem4.4; ineq}
    & - a_1 \mu_1 n 
          \displaystyle \int^{s_0}_0 s^{-b}(s_0-s) 
          \left( \int^s_0 U_s(\sigma,t) V_s(\sigma,t)\,d\sigma \right)\,ds
    \notag \\ 
    &\quad\,
    \ge 
    - \dfrac{a_1 \mu_1 n}{b-1} 
       \displaystyle \int^{s_0}_0 s^{-b}(s_0-s) U_s(s,t) V(s,t)\,ds
  \end{align}
for all $t \in (0,T_{\rm max})$.
\end{lem}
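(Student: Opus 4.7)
The plan is to mimic closely the Fubini-based argument used in the proof of Lemma \ref{UUs}, and then exploit the concavity-type bound from Lemma \ref{increase} to convert the $V_s$ factor into $V/s$. First I would note that $U_s,V_s\ge 0$ on $(0,R^n)\times(0,T_{\rm max})$ because $u,v\ge 0$, so both sides of the claimed inequality \eqref{lem4.4; ineq} are nonpositive; showing the inequality is therefore equivalent to bounding the positive quantity $\int_0^{s_0}s^{-b}(s_0-s)\bigl(\int_0^s U_s V_s\,d\sigma\bigr)ds$ from above by $\tfrac{1}{b-1}\int_0^{s_0}s^{-b}(s_0-s)U_s(s)V(s)\,ds$.

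The first step is to apply Fubini's theorem to rewrite
\[
\int_0^{s_0}s^{-b}(s_0-s)\left(\int_0^s U_s(\sigma,t)V_s(\sigma,t)\,d\sigma\right)ds
=\int_0^{s_0}\left(\int_\sigma^{s_0} s^{-b}(s_0-s)\,ds\right)U_s(\sigma,t)V_s(\sigma,t)\,d\sigma.
\]
Then, for each fixed $\sigma\in(0,s_0)$ I would use the elementary bound $s_0-s\le s_0-\sigma$ on $[\sigma,s_0]$ together with the fact that $b>1$ to compute
\[
\int_\sigma^{s_0} s^{-b}\,ds=\frac{\sigma^{1-b}-s_0^{1-b}}{b-1}\le\frac{\sigma^{1-b}}{b-1},
\]
so that the inner factor is dominated by $\tfrac{1}{b-1}\sigma^{1-b}(s_0-\sigma)$.

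Next, since the assumption \eqref{mu1mu2} is in force, Lemma \ref{increase} is applicable and yields $V_s(\sigma,t)\le V(\sigma,t)/\sigma$ for every $\sigma\in(0,R^n)$ and $t\in[0,T_{\rm max})$. Multiplying the previous estimate by $V_s(\sigma,t)$ and using this bound gives
\[
\int_\sigma^{s_0} s^{-b}(s_0-s)\,ds\cdot V_s(\sigma,t)\le\frac{1}{b-1}\sigma^{-b}(s_0-\sigma)V(\sigma,t),
\]
and reinserting this into the Fubini identity and multiplying by $-a_1\mu_1 n$ (which reverses the inequality, owing to $U_s V_s\ge 0$) yields \eqref{lem4.4; ineq}.

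I do not anticipate a serious obstacle; the only mildly delicate point is the sign bookkeeping and making sure the bound $\sigma^{1-b}-s_0^{1-b}\le\sigma^{1-b}$ is legitimately applied (it is, since $b>1$ forces $s_0^{1-b}>0$). Everything else is a routine Fubini-and-Lemma \ref{increase} manipulation, entirely parallel in spirit to the estimate \eqref{lem4.3; ineq1}--\eqref{lem4.3; ineq2} appearing in the proof of Lemma \ref{UUs}.
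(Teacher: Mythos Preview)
Your proposal is correct and follows essentially the same argument as the paper: the Fubini step yielding the bound $\int_\sigma^{s_0}s^{-b}(s_0-s)\,ds\le\tfrac{1}{b-1}\sigma^{1-b}(s_0-\sigma)$ (using $b>1$), followed by the concavity estimate $V_s\le V/s$ from Lemma~\ref{increase}. The paper records the intermediate inequality $-\tfrac{a_1\mu_1 n}{b-1}\int_0^{s_0}s^{1-b}(s_0-s)U_sV_s\,ds$ before applying $V_s\le V/s$, whereas you fold these two steps together, but the content is identical.
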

%------------------------------proof------------------------------%
\begin{proof}
As in the proof of Lemma \ref{UUs}, Fubini's theorem yields 
  \begin{align*}
    &- a_1 \mu_1 n
         \int^{s_0}_0 s^{-b}(s_0-s) 
         \left( \int^s_0 U_s(\sigma,t)V_s(\sigma,t)\,d\sigma \right)\,ds
    \\ 
    &\quad\,
    \ge - \frac{a_1 \mu_1 n}{b-1}
             \int^{s_0}_0 s^{1-b}(s_0-s) U_s(s,t)V_s(s,t)\,ds
  \end{align*}
for all $t \in (0,T_{\rm max})$.
Also, from Lemma \ref{increase} we obtain that 
$V_s(s,t) \le s^{-1}V(s,t)$
for all $s \in (0,R^n)$ and $t \in (0,T_{\rm max})$, 
which together with the above inequality implies \eqref{lem4.4; ineq}.
\end{proof}

We are now in the position to prove Theorem \ref{thm; blow-up_JL}.
%================================================================%
%                                        Proof of Theorem 5.1                                        %
%================================================================%
\begin{proof}[Proof of Theorem \ref{thm; blow-up_JL}]
We first find $b \in \left( 1,2-\frac{4}{n} \right)$ 
that there exist $c_1>0$ and $c_2>0$ such that for any $s_0 \in (0,R^n)$, 
  \begin{align}\label{ode}
    \phi_U'(t) \ge c_1 s_0^{-(3-b)} \phi_U^2(t) - c_2 s_0^{3-b-\frac{4}{n}}
  \end{align}
for all $t \in (0,\min\{ 1,T_{\rm max} \})$.
To see this, we first consider the case $\kappa_1 \in (1,2)$. 
In this case, the condition for $\chi_1$ in \eqref{JLblowup}
implies
  \begin{align}\label{case1}
    2-\frac{4}{n} 
    > 1+\frac{a_1 d_3 \mu_1}{\beta \chi_1}.
  \end{align}
Thus we can choose 
$b_1 \in \big( 1+\frac{a_1 d_3 \mu_1}{\beta \chi_1}, 2-\frac{4}{n} \big)$. 
Then, applying Lemmas \ref{lem; phi_U'}--\ref{UsV}, 
we have
  \begin{align}\label{basic_phi_U'_2}
    \phi_U'(t) &\ge - c_3 s_0^{\frac{3-b_1}{2}-\frac{2}{n}} \sqrt{\psi_U(t)} 
  \notag \\ 
  &\quad\,
                      + \frac{\alpha \chi_1 n}{d_3} 
                         \psi_U(t)
  \notag \\ 
  &\quad\,
                      + \left( \frac{\beta \chi_1 n}{d_3} - \frac{a_1 \mu_1 n}{b _1 - 1} \right)
                         \int^{s_0}_0 s^{-b_1}(s_0-s) U_sV\,ds
  \notag \\ 
  &\quad\, 
                      - c_3 s_0^{\frac{3-b_1}{2}} \sqrt{\psi_U(t)} 
  \notag \\ 
  &\quad\, 
                      - c_3 s_0^{(3-b_1)\frac{2-\kappa_1}{2}} 
                         \psi_U^{\frac{\kappa_1}{2}}(t)
  \end{align}
for all $t \in (0,\min\{1,T_{\rm max}\})$ 
with some $c_3>0$. 
Noting from the condition 
$b_1 > 1+\frac{a_1 d_3 \mu_1}{\beta \chi_1}$ that 
  \begin{align}\label{UsVnonposi}
    \left( \frac{\beta \chi_1 n}{d_3} - \frac{a_1 \mu_1 n}{b _1 - 1} \right)
    \int^{s_0}_0 s^{-b_1}(s_0-s) U_sV\,ds \ge 0
  \end{align}
and using Young's inequality 
to the first, fourth and fifth terms 
on the right-hand side of \eqref{basic_phi_U'_2}, 
we obtain $c_4>0$ such that
  \[
    \phi_U'(t) \ge \frac{\alpha \chi_1 n}{2 d_3} \psi_U(t) 
                     - c_4 s_0^{3-b_1-\frac{4}{n}} 
                        \left( 1 + 2 s_0^{\frac{4}{n}} \right)
  \]
for all $t \in (0,\min\{1,T_{\rm max}\})$. 
Hence, from \eqref{psiphi} and the fact that $s_0<R^n$ it follows that 
  \[
    \phi_U'(t) \ge c_5 
                     s_0^{-(3-b_1)}\phi_U^2(t) 
                     - c_4 \left( 1 + 2 R^4 \right) 
                        s_0^{3-b_1-\frac{4}{n}} 
  \] 
for all $t \in (0,\min\{1,T_{\rm max}\})$ with some $c_5>0$, 
which means that \eqref{ode} holds with 
$c_1=c_5$ and $c_2=c_4 \left( 1 + 2 R^4 \right)$.
Next, we consider the case $\kappa_1 = 2$. 
In light of the condition for $\chi_1$ in \eqref{JLblowup} 
we can make sure that \eqref{case1} and the relation 
$2-\frac{4}{n} > 1+\frac{d_3 \mu_1}{\alpha \chi_1}$ 
hold, 
and thereby take 
$b_2 \in \big( 1+\max\big\{ \frac{a_1}{\beta}, \frac{1}{\alpha} \big\} \cdot 
\frac{d_3 \mu_1}{\chi_1}, 2-\frac{4}{n} \big)$. 
By means of Lemmas \ref{lem; phi_U'}--\ref{UsV}, 
there exists $c_6>0$ such that 
  \begin{align*}
    \phi_U'(t) &\ge - c_6 s_0^{\frac{3-b_2}{2}-\frac{2}{n}} \sqrt{\psi_U(t)} 
  \notag \\ 
  &\quad\,
                      + \left( \frac{\alpha \chi_1 n}{d_3} - \frac{\mu_1 n}{b_2-1} \right)
                         \psi_U(t)
  \notag \\ 
  &\quad\,
                      + \left( \frac{\beta \chi_1 n}{d_3} - \frac{a_1 \mu_1 n}{b _2 - 1} \right)
                         \int^{s_0}_0 s^{-b_2}(s_0-s) U_sV\,ds
  \notag \\ 
  &\quad\, 
                      - c_6 s_0^{\frac{3-b_2}{2}} \sqrt{\psi_U(t)} 
  \end{align*}
for all $t \in (0,\min\{1,T_{\rm max}\})$. 
By the condition
$b_2>1+\max\big\{ \frac{a_1}{\beta}, \frac{1}{\alpha} \big\} \cdot \frac{d_3 \mu_1}{\chi_1}$, 
the identity \eqref{UsVnonposi} with $b_1$ replaced with $b_2$ holds. 
Furthermore we deduce that 
  \[
    \frac{\alpha \chi_1 n}{d_3} - \frac{\mu_1 n}{b_2-1} > 0.
  \]
Therefore, as in the case $\kappa_1 \in (1,2)$, 
Young's inequality and \eqref{psiphi} as well as the fact that $s_0 < R^n$
provide that
  \[
    \phi_U'(t) \ge c_7 
                     s_0^{-(3-b_2)}\phi_U^2(t) 
                     - c_8 s_0^{3-b_2-\frac{4}{n}} 
  \]
for all $t \in (0,\min\{1,T_{\rm max}\})$ with some $c_7>0$ and $c_8>0$. 

We next show that $T_{\rm max}$ is finite. 
Now we pick $b \in \big( 1,2-\frac{4}{n} \big)$ such that 
\eqref{ode} holds.
Let us fix $s_0 \in (0,R^n)$ so small satisfying 
  \[
    \sqrt{\frac{c_2}{c_1}} s_0^{1-\frac{2}{n}}
          + \frac{2}{c_1} s_0
    \le
    \frac{\widetilde{M}_0}{2^{3-b} \omega_n}, 
  \]
and hence
  \[
    \frac{\widetilde{M}_0}{2^{3-b} \omega_n} s_0^{2-b} 
    \ge \sqrt{\frac{c_2}{c_1}} s_0^{3-b-\frac{2}{n}}
          + \frac{2}{c_1} s_0^{3-b}.
  \]
Then, putting $r_\star:=\big( \frac{s_0}{4} \big)^\frac{1}{n} \in (0,R)$ 
and taking the initial data $u_0$ with \eqref{initial}, \eqref{initial2} and \eqref{initial JL}, 
we obtain that $T_{\rm max}<\infty$ as in the proof of Theorem \ref{thm; blow-up_KS}, 
which means the end of the proof.
\end{proof} 
\newpage
\bibliographystyle{plain}
\bibliography{MTY_chemotaxis-competition_2022_0807.bbl}
\end{document}